\numberwithin{equation}{section}
  \theoremstyle{definition}
  \newtheorem{defn}{\protect\definitionname}[section]
  \theoremstyle{remark}
  \newtheorem{rem}{\protect\remarkname}[section]
  \theoremstyle{plain}
  \newtheorem{thm}{\protect\theoremname}[section]
  \theoremstyle{plain}
  \newtheorem{lem}{\protect\lemmaname}[section]
  \theoremstyle{plain}
  \newtheorem{prop}{\protect\propositionname}[section]
  \providecommand{\definitionname}{Definition}
  \providecommand{\lemmaname}{Lemma}
  \providecommand{\propositionname}{Proposition}
  \providecommand{\remarkname}{Remark}
\providecommand{\theoremname}{Theorem}
\begin{document}

\title{A Quasi-sure Non-degeneracy Property for the Brownian Rough Path}

\author{H. Boedihardjo%
\thanks{Department of Mathematics and Statistics , University of Reading,
Reading RG6 6AX, United Kingdom. Email: h.s.boedihardjo@reading.ac.uk.%
}, X. Geng%
\thanks{Mathematical Institute, University of Oxford, Oxford OX2 6GG, United
Kingdom. Email: xi.geng@maths.ox.ac.uk. %
}, X. Liu%
\thanks{Mathematical Institute, University of Oxford, Oxford OX2 6GG, United
Kingdom. Email: xuan.liu@maths.ox.ac.uk. %
} \ and Z. Qian%
\thanks{Mathematical Institute, University of Oxford, Oxford OX2 6GG, United
Kingdom. Email: zhongmin.qian@maths.ox.ac.uk.%
} }
\maketitle
\begin{abstract}
In the present paper, we are going to show that outside a slim set in the sense of Malliavin (or quasi-surely),
the signature path (which consists of iterated path integrals in every degree) of Brownian motion is non-self-intersecting.
This property relates closely to a non-degeneracy property for the Brownian rough path arising naturally from the uniqueness of signature
problem in rough path theory. As an important consequence we conclude that quasi-surely, the Brownian rough path does not have any tree-like pieces
and every sample path of Brownian motion is uniquely determined by its signature up to reparametrization.
\end{abstract}

\section{Introduction}

In 1954, motivated from the study of homotopy theory and loop space
homology, Chen \cite{Chen54} proposed a way of representing a vector-valued
path $x$ by a fully non-commutative tensor series 
\begin{equation}
S(x)=\sum_{n=0}^{\infty}\int_{0<t_{1}<\cdots<t_{n}<T}dx_{t_{1}}\otimes\cdots\otimes dx_{t_{n}}\label{eq: signature}
\end{equation}
of iterated path integrals. In recent literature, this representation
is known as the \textit{signature }of a path. Intuitively, the first
degree of $S(x)$ is the increment of $x,$ and the second degree
of $S(x)$ encodes the geometric signed area enclosed by $x$
and the chord connecting its end points. In general, the signature
is a global quantity which captures the total ``area'' in each degree produced by the
underlying path.

The fundamental importance of the signature representation lies in
the fact that it is essentially faithful: the signature uniquely determines
the underlying path in a certain sense. This is a deep point as it
reveals the relationship between local and global properties of a path.
The first result along this direction was contained in Chen's original
work \cite{Chen58} in 1958, in which he proved that an irreducible
and piecewise regular path is uniquely determined by its signature
up to translation and reparametrization. 

However, the class of paths Chen studied is very special as it does
not reveal a crucial invariance property of the signature map: a piece
along which the path $x$ goes out and traces back does not contribute
to the signature of $x$. The characterization of this invariance
property in a precise mathematical form is the key point of understanding
in what sense a generic path $x$ is uniquely determined by its signature.
It was after five decades that Hambly and Lyons \cite{HL10} first
gave a complete characterization in the case of continuous paths with
bounded variation. In particular, they showed that a continuous path
with bounded variation is uniquely determined by its signature up
to tree-like equivalence in their sense defined in terms of a height
function.

Since the work of Hambly and Lyons, many efforts have been made to
explore beyond the bounded variation setting. For applications in
probability theory, a natural class of paths to be considered is the
space of rough paths, as it is well known that a large amount of interesting
stochastic processes can be regarded as rough paths in a canonical
way. However, in the rough path setting, Hambly and Lyons' characterization
does not apply any more as their tree-like characterization forces
the underlying path to have bounded variation. It was in a recent
work of Boedihardjo, Geng, Lyons and Yang \cite{BGLY15} that the
right characterization for the above invariance property was identified
in terms of a real tree structure and the corresponding uniqueness
result for signature was established. 

On the other hand, if we consider the uniqueness problem for sample
paths of a stochastic process, we might expect stronger results since
a stochastic process usually has non-degenerate sample paths and the
above invariance phenomenon will not appear at all. A series of probabilistic
works have been done along this direction, originally for Brownian
motion by Le Jan and Qian \cite{LQ13}, which was later extended to
hypoelliptic diffusions by Geng and Qian \cite{GQ16} and Gaussian
processes by Boedihardjo and Geng \cite{BG15}. Formally the result
can be stated as the fact that with probability one, every sample
path of the underlying stochastic process is uniquely determined by
its signature up to translation and reparametrization.

The techniques involved in studying the uniqueness problem for signature
in the deterministic and probabilistic settings are very different.
Moreover, the deterministic result is weaker but it treats all possible
rough paths in one goal, while the probabilistic result is stronger
but we have to work in the support of the law of the underlying process
on path space. The link between the deterministic and probabilistic
approaches seems to be missing, and the main goal of the present paper
is to fill in this gap in a relatively robust way.

To be more precise, we will be interested in the following non-degeneracy
property for the Brownian rough path: \textit{it is not possible for
a path having a piece along which the path goes out and traces back}
(the precise mathematical statement will be made in the next section).
As discussed before, this non-degeneracy property arises naturally
from the deterministic uniqueness of signature problem for rough paths.
In particular, we are going to prove this non-degeneracy property
in Malliavin's capacity setting, which is much stronger than the probability
measure case and it reveals finer analytic structure over the Wiener
space than the underlying probability measure. According to the deterministic
uniqueness result for signature in \cite{BGLY15}, a direct consequence is that outside
a slim set in the sense of Malliavin or quasi-surely (see definition
in the next Section), every sample path of Brownian motion is uniquely determined
by its signature up to reparametrization. 

The main motivation of investigating quasi-sure analysis for the Brownian
rough path lies in the fundamental work of Sugita \cite{Sugita88}
in 1988 that capacity is a universal object with respect to a large
class of positive generalized Wiener functionals. Therefore, the quasi-sure
analysis provides a powerful universal tool in studying degenerate
functionals (for instance the Brownian bridge or pinned diffusions)
on the Brownian rough path. 

According to \cite{BGLY15}, the aforementioned quasi-sure non-degeneracy property for the Brownian rough path
is \textit{equivalent} to the quasi-sure non-self-intersection
for the Brownian signature path. Indeed, we are going to obtain a quantitative 
constraint on the degree $n$ of signature, the dimension $d$ of Brownian motion and the capacity
index $(r,q)$ (see (\ref{eq: capacity}) in the next Section for definition), under which the truncated Brownian signature path
up to degree $n$ is non-self-intersecting outside a set of
zero $(r,q)$-capacity. 

Intersection properties for random walks and stochastic processes is a classical topic in probability theory, 
and it has important applications in statistical field theory.
The non-self-intersection of sample paths of Brownian motion was studied extensively
in the literature. The first result dates back to 1944, in which Kakutani
\cite{Kakutani44} proved that almost every sample path of Brownian
motion is non-self-intersecting if the dimension $d\geqslant5$. Later
on, it was known by Dvoretzky, Erd\H{o}s and Kakutani \cite{DEK50}
that the optimal dimension is $d=4.$ The technique of Kakutani was
extended to the capacity setting on Wiener space by Fukushima \cite{Fukushima84}.
In particular, he showed that outside a set of zero $(1,2)$-capacity, every
sample path of Brownian motion is non-self-intersecting if $d\geqslant7.$ This
result was further extended by Takeda \cite{Takeda84} for general
$(r,q)$-capacities under the constraint $d>rq+4.$ It is remarkable
that in Fukushima's setting, Lyons \cite{Lyons86} proved that the
optimal dimension is $d=6$. However, it
is not known (and we expect that it is not true) that outside a slim set 
every sample path of Brownian motion is non-self-intersecting regardless of
the dimension $d$. 

Our technique of proving the quasi-sure non-self-intersection of the
Brownian signature path is inspired by the general ideas contained
in the aforementioned series of works. In particular, the key ingredient
is to establish a maximal type capacity estimate and a small ball
capacity estimate for the signature path. However, it will be clear
that our technique is robust enough to be extended to more general
Gaussian processes as it does not rely on the explicit distribution
of Brownian motion and any martingale properties, which is indeed
the case for the aforementioned works. In contrast to the Brownian
motion, as the Brownian signature path is an infinite dimensional
process taking values in the algebra of tensor series, it is not entirely
surprising that a quasi-sure non-self-intersection result can be expected.

According to Sugita's work in \cite{Sugita88}, our result implies
the corresponding almost-sure non-degeneracy property and uniqueness
of signature result for any probability measure on $W$ associated
with a positive generalized Wiener functional.

The present paper is organized in the following way. In Section 2
we formulate our main result, in Section 3 we develop the proofs
and in Section 4 we give a few remarks as conclusion.

\section{Formulation of Main Result}

In this section, we present the basic notions in quasi-sure analysis
and formulate our main result. We refer the reader to \cite{Malliavin97}
and \cite{Shigekawa98} for a systematic introduction to the Malliavin
calculus and quasi-sure analysis.

Let $(W,\mathcal{B}(W),\mathbb{P})$ be the canonical Wiener space
over $\mathbb{R}^{d}.$ In other words, $W$ is the space of continuous
paths $w:\ [0,1]\rightarrow\mathbb{R}^{d}$ starting at the origin
equipped with the uniform topology, $\mathcal{B}(W)$ is the Borel
$\sigma$-algebra and $\mathbb{P}$ is the canonical Wiener measure.
Let $\mathcal{H}$ be the space of absolutely continuous paths in
$W$ with square integrable derivative with respect to the Lebesgue
measure. It is well known that the canonical embedding $\iota:\ \mathcal{H}\rightarrow W$
give rise to the structure of an abstract Wiener space in the sense
of Gross. Let $\iota^{*}:\ W^{*}\rightarrow\mathcal{H}^{*}\cong\mathcal{H}$
be the corresponding dual embedding.

Consider the space $\mathcal{P}$ of polynomial functionals over $W,$
which consists of functionals of the form $F=f(\varphi_{1},\cdots,\varphi_{n})$,
where $f$ is a polynomial over $\mathbb{R}^{n}$ and $\varphi_{1},\cdots,\varphi_{n}\in W^{*}.$
The \textit{Malliavin derivative} of $F$ is the $\mathcal{H}$-valued
functional 
\[
DF=\sum_{i=1}^{n}\frac{\partial f}{\partial x^{i}}(\varphi_{1},\cdots,\varphi_{n})\iota^{*}\varphi_{i}.
\]
This definition extends to Hilbert space valued polynomial functionals
in a natural way. In particular, the $r$-th derivative of $F\in\mathcal{P}$
can be defined inductively as an $\mathcal{H}^{\otimes r}$-valued
functional. For $r\in\mathbb{N}$ and $q\geqslant1$, the $(r,q)$-Sobolev
norm of $F$ is defined to be 
\[
\|DF\|_{r,q}=\left(\sum_{i=0}^{r}\mathbb{E}[\|D^{i}F\|_{\mathcal{H}^{\otimes i}}^{q}]\right)^{\frac{1}{q}}.
\]
The $(r,q)$-\textit{Sobolev space} $\mathbb{D}_{r,q}$ is the completion
of $\mathcal{P}$ under the $(r,q)$-Sobolev norm. 

Throughout the rest we always assume that $r\in\mathbb{N}$ and $q>1.$ 

Let $O$ be an open subset of $W.$ The $(r,q)$\textit{-capacity}
of $O$ is defined to be 
\[
\mathrm{Cap}_{r,q}(O)=\inf\{\|F\|_{r,q}:\ F\in\mathbb{D}_{r,q},\ F\geqslant1\ \mathrm{on}\ O,\ F\geqslant0\ \mathrm{on}\ W\ \mathrm{for}\ \mathbb{P}\mathrm{-a.s.}\}.
\]
For a general subset $A\subset W,$ its $(r,q)$-capacity is defined
to be 
\begin{equation}
\mathrm{Cap}_{r,q}(A)=\inf\{\mathrm{Cap}_{r,q}(O):\ O\ \mathrm{open,\ }A\subset O\}.\label{eq: capacity}
\end{equation}
It is not hard to see that the $(r,q)$-capacity is non-negative,
increasing and sub-additive. Moreover, the following inequality holds:
\[
\mathbb{P}(A)^{\frac{1}{q}}=\mathrm{Cap}_{0,q}(A)\leqslant\mathrm{Cap}_{r,q}(A),\ \forall A\in\mathcal{B}(W).
\]
Therefore, capacities are finer scales in measuring the size of a
set from an analytic view point than the underlying probability measure. 

According to Malliavin, a \textit{slim} set is a subset having zero
$(r,q)$-capacity for every $(r,q).$ A property on paths is said
to hold \textit{quasi-surely} if it holds outside a slim set. We are
interested in properties which hold quasi-surely. 

We will also be working with functions that are defined quasi-surely.
A function $f$ on $W$ is said to be $(r,q)$-\textit{quasi-continuous}
if for every $\varepsilon>0,$ there exists an open subset $O\subset W$,
such that $\mathrm{Cap}_{r,q}(O)<\varepsilon$ and $f|_{O^{c}}$ is
continuous. A main property for quasi-continuous functions that we
will be using is a version of Chebyshev's inequality (see \cite{Malliavin97},
Theorem 2.2):
\begin{equation}
\mathrm{Cap}_{r,q}(|f|>R)\leqslant\frac{M_{r,q}\|f\|_{r,q}}{R},\ R>0,\label{eq: Chebyshev inequality}
\end{equation}
for any $(r,q)$-quasi-continuous function $f\in\mathbb{D}_{r,q},$ where
$M_{r,q}$ is a constant depending only on $r$ and $q.$

Now we are in a position to formulate our main result. 

The basic object we are interested in is the \textit{Brownian rough
path}
\[
\mathbf{w}=(1,w^{1},w^{2}):\ [0,1]\rightarrow G^{2}(\mathbb{R}^{d})=\exp\left(\mathbb{R}^{d}\oplus[\mathbb{R}^{d},\mathbb{R}^{d}]\right)
\]
in dimension $d\geqslant2$, which is the canonical lifting of Brownian motion to the free
nilpotent Lie group of order $2$ over $\mathbb{R}^d$. Heuristically, through the logarithmic diffeomorphism onto the Lie algebra, the Brownian rough path $\mathbf{w}$ is equivalent to the process 
$$\sum_{j=1}^{d}w_{t}^{j}e_{j}+\frac{1}{2}\sum_{1\leqslant j<k\leqslant d}\left(\int_{0}^{t}w_{s}^{j}dw_{s}^{k}-w_{s}^{k}dw_{s}^{j}\right)[e_{j},e_{k}],$$which is the original Brownian motion coupled with its Lévy area process.

In rough path theory, it is an important result of Lyons \cite{Lyons98}
that any rough path $\mathbf{X}$ with roughness $p$ (or a $p$-rough
path) extends uniquely to a continuous path $\mathbb{X}$ taking values
in the algebra 
\[
T((\mathbb{R}^{d}))=\mathbb{R}\oplus\mathbb{R}^{d}\oplus(\mathbb{R}^{d})^{\otimes2}\oplus\cdots
\]
of tensor series, such that the projection of $\mathbb{X}$ onto the
truncated tensor algebra up to every degree $n\geqslant\lfloor p\rfloor$
has finite $p$-variation. Here the truncated tensor algebra $T^{(n)}(\mathbb{R}^{d})$
is equipped with the Hilbert-Schmidt norm. Lyons' lifting $\mathbb{X}$
of a rough path $\mathbf{X}$ is also known as the \textit{signature
path} of $\mathbf{X}$. This is a generalized notion of taking iterated
path integrals in each degree, and the signature is just the end point
of the signature path (see (\ref{eq: signature})).

According to Inahama \cite{Inahama06} (see also \cite{BGQ14}), the Brownian rough path is quasi-surely well defined as the limit
of the lifting of dyadic piecewise linear interpolation of Brownian motion under $p$-variation metric for $2<p<3$. Therefore, from 
Lyons' extension theorem, the Brownian signature path and the signature are well defined quasi-surely.

\begin{rem}
Due to the multiplicative structure in $T((\mathbb{R}^{d})),$ a rough
path $\mathbf{X}$ can either mean an actual path $\mathbf{X}_{t}$
indexed by a single parameter $t$ or a multiplicative functional
$\mathbf{X}_{s,t}$ indexed by a pair $s\leqslant t.$ These two notions
are interchangeable with each other by setting $\mathbf{X}_{s,t}=\mathbf{X}_{s}^{-1}\otimes\mathbf{X}_{t}$
and $\mathbf{X}_{t}=\mathbf{X}_{0,t}.$ In this paper, except for
the situation in which we write down the notation explicitly with
a double subscript, when referring to a rough path or a signature
path, we always mean the actual path with one single parameter.
\end{rem}

Our main result can be stated as follows.

\begin{thm}
\label{thm: quantitative non-self-intersection}For $n\in\mathbb{N}$, define
$$\mathcal{O}_{n}=\{w\in W:\ S_{n}(\mathbf{w})_{0,s}=S_{n}(\mathbf{w})_{0,t}\ \mathrm{for\ some\ }0\leqslant s<t\leqslant1\},$$where
$S_n(\mathbf{w})_{0,t}$ is the truncated Brownian signature path up to degree $n$. Then $\mathcal{O}_{n}$
has zero $(r,q)$-capacity provided 
\[
\left(\begin{array}{c}
n+d-1\\
n
\end{array}\right)>rq+4.\label{eq: quantitative constraint}
\]
In particular, the Brownian signature path is non-self-intersecting quasi-surely.
\end{thm}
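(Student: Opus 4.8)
The plan is to follow the strategy signalled in the introduction: reduce the statement to two capacity estimates — a small-ball (or small-deviation) capacity estimate for the increment of the truncated signature path, and a maximal-type capacity estimate controlling the modulus of continuity — and then combine them through a covering argument analogous to Kakutani--Dvoretzky--Erd\H{o}s and its capacity refinement by Fukushima and Takeda. I would first fix $n$ and set $N=\binom{n+d-1}{n}$, which is the dimension of the homogeneous degree-$n$ component $(\mathbb{R}^d)^{\otimes n}$ of the free nilpotent object, or rather the relevant number of independent coordinates governing the leading-order nondegeneracy; the role of the constraint $N>rq+4$ strongly suggests that the effective dimension of the obstruction to self-intersection is $N$, playing exactly the role that $d$ plays in Takeda's criterion $d>rq+4$.

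Concretely, I would discretise the simplex $\{0\leqslant s<t\leqslant 1\}$ into dyadic pairs and write $\mathcal{O}_n$ as a countable intersection over scales of unions of events of the form $\{|S_n(\mathbf{w})_{0,s}-S_n(\mathbf{w})_{0,t}|<\delta_k\}$ over pairs $(s,t)$ lying in a grid at scale $2^{-k}$. The first key estimate I would establish is an upper bound on $\mathrm{Cap}_{r,q}$ of the small-ball event $\{w:\ |S_n(\mathbf{w})_{s,t}|<\varepsilon\}$, of the form $C\,\varepsilon^{N}$ up to logarithmic or polynomial corrections in the relevant parameters; this would be obtained via Chebyshev's inequality \eqref{eq: Chebyshev inequality} applied to a smooth functional approximating the indicator of the small ball, using that $S_n(\mathbf{w})_{s,t}$ lies in $\mathbb{D}_{r,q}$ with all Malliavin derivatives controlled, together with scaling in $|t-s|$. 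The exponent $N$ must come from the fact that the law of the degree-$n$ signature increment is nondegenerate in $N$ directions (its density near the origin behaves like that of an $N$-dimensional object), so that the capacity of a small ball scales like its ``volume''. The second key estimate is a maximal inequality giving uniform modulus-of-continuity control of $s\mapsto S_n(\mathbf{w})_{0,s}$ in the $(r,q)$-quasi-continuous sense, so that the discrete grid genuinely captures all coincidences; this I would derive from a Garsia--Rodemich--Rumsey type argument lifted to capacities, again via \eqref{eq: Chebyshev inequality} applied to an appropriate Besov-type functional.

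Having both estimates, I would run the covering bound: the $(r,q)$-capacity of the scale-$k$ union is at most the number of admissible pairs, of order $2^{2k}$ (two free time parameters), times the small-ball capacity at the scale $\delta_k\sim 2^{-k/n}$ dictated by the homogeneity of the degree-$n$ increment, giving a bound of order $2^{2k}\cdot(2^{-k/n})^{N}=2^{k(2-N/n)}$, or more precisely a bound whose summability over $k$ is governed by the sign of $N - (\text{number of free parameters})\cdot n$; the precise accounting of powers, including the contribution of the capacity index $(r,q)$ through the constant $M_{r,q}$ and the quasi-continuous Chebyshev bound, is what produces the sharp threshold $N>rq+4$. By sub-additivity and the Borel--Cantelli-type passage to the limit over $k$, summability forces $\mathrm{Cap}_{r,q}(\mathcal{O}_n)=0$. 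Finally, since a slim set is one of zero $(r,q)$-capacity for \emph{every} $(r,q)$, and for any fixed $(r,q)$ one may choose $n$ large enough that $\binom{n+d-1}{n}>rq+4$ (the left side grows in $n$), the quasi-sure non-self-intersection of the full signature path follows by taking the union over a cofinal family.

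The main obstacle I expect is the small-ball capacity estimate, specifically obtaining the correct exponent $N=\binom{n+d-1}{n}$ rather than merely $d$ or $n$. In the almost-sure (probability) setting one would use an explicit density bound for the signature increment, but here one must instead produce a \emph{quasi-continuous} functional and control its full $(r,q)$-Sobolev norm uniformly in the small-ball radius and in $(s,t)$; the delicate point is that Chebyshev's inequality \eqref{eq: Chebyshev inequality} controls the capacity of a level set of a fixed functional, so approximating the indicator of a shrinking ball in $N$ effective dimensions while keeping track of how the Sobolev norm blows up as $\varepsilon\to 0$ requires a careful nondegeneracy (Malliavin covariance) analysis of the map $w\mapsto S_n(\mathbf{w})_{s,t}$. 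Establishing that this covariance matrix is nondegenerate in exactly $N$ directions quasi-surely, with quantitative lower bounds robust enough to survive integration in the capacity norm, is where the real work lies; the maximal estimate and the covering argument, by contrast, are expected to be technical but routine adaptations of the GRR inequality and the Fukushima--Takeda scheme.
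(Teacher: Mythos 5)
Your overall architecture matches the paper's: a maximal-type capacity estimate, a small-ball capacity estimate, and Kakutani's subdivision scheme, with the threshold $rq+4$ emerging from balancing the two. But there are genuine gaps at the two places where the real content lives.

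First, the nondegeneracy. You correctly flag that everything hinges on showing the law of the signature increment is nondegenerate in exactly $\binom{n+d-1}{n}$ directions, but you offer no mechanism for this, and your identification of that number with the dimension of $(\mathbb{R}^d)^{\otimes n}$ is wrong (that dimension is $d^n$). The obstruction is that $S_n(\mathbf{w})_{t_0,t}$, viewed in the full truncated tensor algebra, is \emph{not} hypoelliptic: it lives on the free nilpotent group, a proper submanifold, so its Malliavin covariance is genuinely degenerate and no density bound exists for the full increment. The paper's resolution is to select a \emph{consistent} sub-collection of coordinates --- the non-degenerate words $\mathcal{I}_{d,n}$, of cardinality $\binom{n+d-1}{n}$ --- such that the restriction of the linear signature SDE to these coordinates is a closed system satisfying H\"ormander's condition at the origin (Lemma \ref{lem: hypoellipticity}); the combinatorial verification of the bracket condition is the heart of the argument and is exactly the step your proposal leaves open. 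Only after this restriction does one get a density bound $\sup_x p_{t_0,t_1}(x)\leqslant C|t_1-t_0|^{-\lambda_{n,d}}$, hence a small-ball probability of order $\eta^{|\mathcal{I}_{d,n}|}$, which converts to a capacity bound of order $\eta^{|\mathcal{I}_{d,n}|/\tau q-r}$ (not $\eta^{N}$) via a smooth cutoff whose Sobolev norm costs $\eta^{-r}$ and a H\"older step costing the exponent $1/\tau q$.

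Second, the quantitative accounting in the covering step is off. You take the small-ball radius at scale $2^{-k}$ to be $\delta_k\sim 2^{-k/n}$, ``dictated by the homogeneity of the degree-$n$ increment''; but the binding constraint in the maximal estimate is the \emph{first} level of the signature, which scales like $\Delta^{1/2}$, so the correct choice is $\delta_k=2^{-\sigma k}$ with $\sigma$ just below $1/2$. With that choice the balance $2^{2k}\cdot 2^{-\sigma k(|\mathcal{I}_{d,n}|-rq)}\rightarrow 0$ requires precisely $|\mathcal{I}_{d,n}|>rq+4$; with $\delta_k\sim 2^{-k/n}$ the threshold would not come out as stated. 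Relatedly, to obtain $rq+4$ rather than $rq+4q$ one must exploit the sub-additivity of $\mathrm{Cap}_{r,q}^{\,q}$ (via Meyer's inequalities) rather than of the capacity itself when summing over the $2^{2k}$ pairs of sub-intervals --- a point your proposal does not address. The final passage to the full signature path is right in spirit, though note that $\mathcal{O}\subseteq\bigcap_{n}\mathcal{O}_n$, so for each fixed $(r,q)$ one simply picks one sufficiently large $n$; no union over a cofinal family is involved.
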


\begin{rem}
Takeda \cite{Takeda84} proved that if $d>rq+4$, 
outside a set of zero $(r,q)$-capacity every sample path of Brownian motion is non-self-intersecting. This corresponds to the case of $n=1$ in Theorem \ref{thm: quantitative non-self-intersection}. Therefore, our main result extends Takeda's result to the higher degree situation. 
\end{rem}

The non-self-intersection of the Brownian signature path has an important geometric interpretation on the Brownian rough path, which corresponds to the non-degeneracy property mentioned in the introduction and arises naturally from the uniqueness of signature problem in rough path theory. According to \cite{BGLY15}, this non-degeneracy property can be made precise by using the language of a real tree.
Recall that a \textit{real tree}
is a metric space $\tau$ in which every two distinct points can be
joined by a unique non-self-intersecting path (up to reparametrization),
and this path is a geodesic. 

A continuous path $x:\ [0,1]\rightarrow X$
in some topological space $X$ is called \textit{tree-like}, if there
exists a real tree $\tau,$ and two continuous maps $\alpha:\ [0,1]\rightarrow\tau$
and $\beta:\ \tau\rightarrow X$ such that $\alpha(0)=\alpha(1)$
and $x=\beta\circ\alpha.$ In other words, a tree-like path is a path
which can be realized as a loop in some real tree. A \textit{tree-like
piece} of a continuous path $x$ is a pair $s<t$ such that $x|_{[s,t]}$
is tree-like. 

From the feature of a real tree, it is clear that the aforementioned
non-degeneracy property means the fact that a path does not have any
tree-like pieces. 
\begin{defn}
A continuous path is called \textit{tree-reduced }if it does not have any
tree-like pieces.
\end{defn}

It is clear that if a path is non-self-intersecting, then it is tree-reduced.

According to the deterministic uniqueness result for signature in \cite{BGLY15}, we know that a weakly geometric
rough path (a continuous path in the free nilpotent group of order
$\lfloor p\rfloor$ with finite $p$-variation for some $p\geqslant1$)
is tree-like if and only if it has trivial signature. Therefore, a tree-like piece in a rough path corresponds to a loop in its signature path and vice versa. It follows immediately that Theorem \ref{thm: quantitative non-self-intersection} is \textit{equivalent} to the following, which is already interesting on its own. 

\begin{thm}
\label{thm: tree-reduced property}The Brownian rough path is tree-reduced quasi-surely.
\end{thm}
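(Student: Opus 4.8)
The plan is to derive Theorem~\ref{thm: tree-reduced property} directly from Theorem~\ref{thm: quantitative non-self-intersection} together with the deterministic characterization of \cite{BGLY15}, the crucial point being that the combinatorial constraint in Theorem~\ref{thm: quantitative non-self-intersection} can be met at \emph{every} capacity index by passing to a sufficiently high degree.

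First I would translate the tree-reduced property into a statement about self-intersection of the signature path. Since quasi-surely the restriction $\mathbf{w}|_{[s,t]}$ is itself a weakly geometric rough path, the uniqueness result of \cite{BGLY15} applies: for $0\leqslant s<t\leqslant 1$, the piece $\mathbf{w}|_{[s,t]}$ is tree-like if and only if its signature is trivial, i.e. $S(\mathbf{w})_{s,t}=\mathbf{1}$. By the multiplicative structure recorded in the Remark, $S(\mathbf{w})_{s,t}=S(\mathbf{w})_{0,s}^{-1}\otimes S(\mathbf{w})_{0,t}$, so triviality of $S(\mathbf{w})_{s,t}$ is equivalent to $S(\mathbf{w})_{0,s}=S(\mathbf{w})_{0,t}$. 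Hence the set of paths possessing at least one tree-like piece coincides with the set on which the full signature path self-intersects,
\[
\mathcal{N}=\{w\in W:\ S(\mathbf{w})_{0,s}=S(\mathbf{w})_{0,t}\ \text{for some}\ 0\leqslant s<t\leqslant 1\},
\]
and proving the theorem amounts to showing that $\mathcal{N}$ is slim.

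Next I would compare $\mathcal{N}$ with the truncated sets $\mathcal{O}_n$. If $S(\mathbf{w})_{0,s}=S(\mathbf{w})_{0,t}$, then projecting onto the truncated tensor algebra up to degree $n$ yields $S_n(\mathbf{w})_{0,s}=S_n(\mathbf{w})_{0,t}$, so $\mathcal{N}\subseteq\mathcal{O}_n$ for every $n\in\mathbb{N}$. Now fix an arbitrary capacity index $(r,q)$. Since $\binom{n+d-1}{n}=\binom{n+d-1}{d-1}$ is a polynomial in $n$ of degree $d-1\geqslant 1$ (recall $d\geqslant 2$), it tends to infinity as $n\to\infty$; hence I may choose $n=n(r,q,d)$ with $\binom{n+d-1}{n}>rq+4$. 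Theorem~\ref{thm: quantitative non-self-intersection} then gives $\mathrm{Cap}_{r,q}(\mathcal{O}_n)=0$, and monotonicity of capacity forces $\mathrm{Cap}_{r,q}(\mathcal{N})\leqslant\mathrm{Cap}_{r,q}(\mathcal{O}_n)=0$. As $(r,q)$ was arbitrary, $\mathcal{N}$ has zero $(r,q)$-capacity for every $(r,q)$, i.e. $\mathcal{N}$ is slim, which is exactly the assertion that the Brownian rough path is tree-reduced quasi-surely.

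The genuinely hard work lies upstream, in Theorem~\ref{thm: quantitative non-self-intersection} itself, whose proof rests on the maximal-type and small-ball capacity estimates for the signature path; granting that result, the only point in the present deduction that requires care is the elementary but decisive observation that $\binom{n+d-1}{n}>rq+4$ can always be achieved by increasing $n$. It is precisely this unboundedness in the degree that removes any dimensional restriction and lets the conclusion hold for all $d\geqslant 2$, in sharp contrast with the $n=1$ (Takeda) case, where the analogous non-self-intersection statement is known to fail below the threshold $d>rq+4$.
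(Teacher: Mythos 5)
Your proof is correct and follows essentially the same route as the paper: the paper deduces Theorem~\ref{thm: tree-reduced property} from Theorem~\ref{thm: quantitative non-self-intersection} via the same \cite{BGLY15} correspondence between tree-like pieces and loops of the signature path, and establishes slimness by the same observation that the self-intersection event is contained in $\bigcap_{n\geqslant 1}\mathcal{O}_n$ while $\binom{n+d-1}{n}\to\infty$ lets every capacity index $(r,q)$ be handled by a suitable $n$. No substantive difference.
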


Another important consequence of Theorem \ref{thm: quantitative non-self-intersection} is a quasi-sure uniqueness result for the signature of Brownian motion. In the uniqueness of signature aspect, this partially extends the work of Le Jan and Qian \cite{LQ13} to the capacity setting. Note that their original work is stronger than uniqueness as it gives an explicit way to reconstruct a sample path of Brownian motion from its signature .

\begin{thm}
\label{thm: quasi-sure uniqueness of signature} Outside a slim set
$\mathcal{N}\subset W,$ two sample paths $w$ and $w'$ of Brownian motion have the
same signature if and only if they differ from each other by a reparametrization.
In other words, quasi-surely every sample path of Brownian motion is uniquely determined
by its signature up to reparametrization.
\end{thm}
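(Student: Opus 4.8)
The plan is to deduce this uniqueness statement from the quasi-sure tree-reduced property (Theorem \ref{thm: tree-reduced property}) together with the deterministic uniqueness of signature result of \cite{BGLY15}, so that all of the probabilistic difficulty is pushed into Theorem \ref{thm: tree-reduced property} and the remaining argument is purely deterministic. Let $\mathcal{N}$ be the slim set provided by Theorem \ref{thm: tree-reduced property}, so that for every $w\notin\mathcal{N}$ the Brownian rough path $\mathbf{w}$ is tree-reduced; I would take this same $\mathcal{N}$ as the exceptional set in the statement. Since the signature and the signature path are themselves only defined quasi-surely via Inahama's result, I would if necessary enlarge $\mathcal{N}$ by a further slim set so that $S(\mathbf{w})$ is well defined for all $w\notin\mathcal{N}$, using that a finite union of slim sets is again slim.

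The easy implication is that a reparametrization does not change the signature. This is the standard reparametrization invariance of iterated path integrals: if $w'=w\circ\sigma$ for a continuous non-decreasing surjection $\sigma\colon[0,1]\to[0,1]$, then at the level of the lift one has $\mathbf{w}'_{s,t}=\mathbf{w}_{\sigma(s),\sigma(t)}$, and passing to the limit this identity is inherited by the full signature. I expect this direction to require only a routine verification that the lifting map commutes with reparametrization.

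For the substantive implication, suppose $w,w'\notin\mathcal{N}$ satisfy $S(\mathbf{w})=S(\mathbf{w}')$. The concatenation $\mathbf{w}\sqcup\overleftarrow{\mathbf{w}'}$ of $\mathbf{w}$ with the time-reversal of $\mathbf{w}'$ then has trivial signature, since the signature is multiplicative under concatenation and the signature of a reversed path is the inverse of the original. By the characterization in \cite{BGLY15}, a weakly geometric rough path has trivial signature if and only if it is tree-like; hence $\mathbf{w}\sqcup\overleftarrow{\mathbf{w}'}$ is tree-like, which is exactly the statement that $\mathbf{w}$ and $\mathbf{w}'$ are tree-like equivalent.

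Finally I would invoke the fact, also part of the analysis in \cite{BGLY15}, that within a tree-like equivalence class the tree-reduced representative is unique up to reparametrization. Since $w,w'\notin\mathcal{N}$, both $\mathbf{w}$ and $\mathbf{w}'$ are tree-reduced by Theorem \ref{thm: tree-reduced property}, and being tree-like equivalent they must therefore differ only by a reparametrization, which then descends to the underlying paths $w$ and $w'$. The main obstacle is precisely this last step: one must be careful that the deterministic theory indeed yields uniqueness of the tree-reduced representative up to reparametrization (and not merely up to the coarser tree-like equivalence), and that the resulting reparametrization is correctly transferred from the lifts back to the Brownian paths themselves. Everything else is bookkeeping, since the probabilistic content has already been absorbed into Theorem \ref{thm: tree-reduced property}.
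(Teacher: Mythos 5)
Your proof is correct and follows essentially the same route as the paper: both take $\mathcal{N}$ to be the slim set supplied by the main quasi-sure non-degeneracy theorem and then invoke the deterministic uniqueness machinery of \cite{BGLY15} to conclude that two non-degenerate rough paths with the same signature differ by a reparametrization, which descends to the underlying Brownian paths. The only cosmetic difference is that you phrase the deterministic input via tree-like equivalence and uniqueness of the tree-reduced representative, whereas the paper quotes the equivalent statement that for each signature there is a unique (up to reparametrization) weakly geometric rough path whose signature path is non-self-intersecting.
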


\section{Proof of the Main Theorem}

In this section, we are going to develop the proof of Theorem \ref{thm: quantitative non-self-intersection} and point out how Theorem \ref{thm: quasi-sure uniqueness of signature} follows easily from this and the deterministic uniqueness result for signature.

Along the general ideas in the aforementioned works of Kakutani, Fukushima
and Takeda, our proof of Theorem \ref{thm: quantitative non-self-intersection}
contains three main steps: a large deviation type capacity estimate
for the maximal functional on the signature path, a small ball capacity
estimate for the signature path, and a subdivision argument.

A crucial point in our proof is a general and useful technique in
rough path theory on controlling higher degree signature components.
It consists of a quantitative statement of Lyons' lifting theorem
and a technique used by Hambly and Lyons \cite{HL98} in the construction
of stochastic area for Brownian motion on the Sierpinski gasket. We
state the result as follows and refer the reader to the monograph
by Lyons and Qian \cite{LQ02} for its proof. 
\begin{thm}
\label{thm: technique of controlling higher degree signature components}Let
$\mathbf{X}=(1,X^{1},\cdots,X^{\lfloor p\rfloor})$ be a $p$-rough
path.

(1) Suppose that $\mathbb{X}=(1,X^{1},X^{2},\cdots)$ is the signature
path of $\mathbf{X}$. If there exists a control function $\omega(s,t)$
such that 
\begin{equation}
\left|X_{s,t}^{i}\right|\leqslant\frac{\omega(s,t)^{\frac{i}{p}}}{\beta(i/p)!}\label{eq: Lyons' lifting}
\end{equation}
for $1\leqslant i\leqslant\lfloor p\rfloor$ and $0\leqslant s<t\leqslant1,$
where $\beta$ is a constant such that 
\begin{equation}
\beta\geqslant p^{2}\left(1+\sum_{l=1}^{\infty}\left(\frac{2}{l}\right)^{(\lfloor p\rfloor+1)/p}\right),\label{eq: beta}
\end{equation}
then the inequality (\ref{eq: Lyons' lifting}) holds for all $i>\lfloor p\rfloor$
as well. 

(2) Given a constant $\gamma>p-1,$ for $0\leqslant s\leqslant t\leqslant1$
and $1\leqslant i\leqslant\lfloor p\rfloor$, define 
\begin{equation}
\rho_{i}(\mathbf{X};s,t)=\sum_{m=1}^{\infty}m^{\gamma}\sum_{k=1}^{2^{m}}\left|X_{t_{m}^{k-1},t_{m}^{k}}^{i}\right|^{\frac{p}{i}},\label{eq: rho function}
\end{equation}
where $(t_{m}^{k})_{0\leqslant k\leqslant2^{m}}$ is the dyadic partition
of $[s,t].$ Then there exists a constant $C=C(p,\gamma)$, such that
\[
\sup_{\mathcal{P}([s,t])}\sum_{l}\left|X_{t_{l-1},t_{l}}^{i}\right|^{\frac{p}{i}}\leqslant C(p,\gamma)\sum_{j=1}^{i}\rho_{j}(\mathbf{X};s,t)
\]
for all $1\leqslant i\leqslant\lfloor p\rfloor$ and $0\leqslant s\leqslant t\leqslant1,$
where the supremum is taken over all finite partitions of $[s,t].$
\end{thm}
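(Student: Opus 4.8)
The plan is to establish the two parts separately, following the classical machinery behind Lyons' extension theorem. For part (1) I would argue by induction on the degree $n\geqslant\lfloor p\rfloor$, showing that once (\ref{eq: Lyons' lifting}) holds for all $i\leqslant n$, the degree-$(n+1)$ component can be constructed so that the same bound persists. Let $\tilde{\mathbf{X}}$ be the truncation to $T^{(n+1)}$ that keeps $X^1,\dots,X^n$ and sets the degree-$(n+1)$ slot to zero; this functional is multiplicative only up to a degree-$(n+1)$ correction, namely $\tilde{\mathbf{X}}_{s,u}\otimes\tilde{\mathbf{X}}_{u,t}-\tilde{\mathbf{X}}_{s,t}=\sum_{i=1}^{n}X^i_{s,u}\otimes X^{n+1-i}_{u,t}$. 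I would then \emph{define} $X^{n+1}_{s,t}$ as the limit, over successive refinements of the dyadic partition of $[s,t]$, of the degree-$(n+1)$ part of the product $\bigotimes_j\tilde{\mathbf{X}}_{u_{j-1},u_j}$, so that $X^{n+1}_{s,t}$ is exactly the accumulated total of all corrections produced as the partition refines.

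The heart of the matter is controlling these corrections. First I would invoke the \emph{neoclassical inequality} to bound a single merged correction: using the inductive estimates, $\bigl|\sum_{i=1}^{n}X^i_{a,m}\otimes X^{n+1-i}_{m,b}\bigr|\leqslant p^2\,\omega(a,b)^{(n+1)/p}/(\beta^2\,((n+1)/p)!)$, the factor $p^2$ being precisely the neoclassical constant. Next I would run Young's coarsening argument: given points $s=u_0<\dots<u_l=t$, superadditivity of $\omega$ together with a pigeonhole estimate furnishes an index $j$ with $\omega(u_{j-1},u_{j+1})\leqslant\tfrac{2}{l-1}\omega(s,t)$, so removing $u_j$ perturbs the degree-$(n+1)$ term by at most $p^2(2\omega(s,t)/(l-1))^{(n+1)/p}/(\beta^2((n+1)/p)!)$. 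Summing as $l$ decreases from the size of the partition down to one, and replacing the exponent $(n+1)/p$ by its smallest admissible value $(\lfloor p\rfloor+1)/p$ to get a uniform tail estimate, yields $|X^{n+1}_{s,t}|\leqslant\frac{p^2}{\beta^2}\bigl(\sum_{l\geqslant1}(2/l)^{(\lfloor p\rfloor+1)/p}\bigr)\omega(s,t)^{(n+1)/p}/((n+1)/p)!$. The induction in degree $n+1$ then closes precisely when $\beta\geqslant p^2\sum_l(2/l)^{(\lfloor p\rfloor+1)/p}$, which is guaranteed by hypothesis (\ref{eq: beta}); the same estimate, read along the tail of the refinement, shows the approximations are Cauchy, so the limit defining $X^{n+1}$ exists.

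For part (2) I would follow the Hambly--Lyons dyadic control technique, whose goal is to dominate the $p/i$-variation of $X^i$ over \emph{arbitrary} partitions by the weighted dyadic sums $\rho_j$, $1\leqslant j\leqslant i$. The basic device is again Chen's identity: for intermediate points $v_0<\dots<v_K$, $X^i_{v_0,v_K}=\sum_{i_1+\dots+i_K=i}X^{i_1}_{v_0,v_1}\otimes\cdots\otimes X^{i_K}_{v_{K-1},v_K}$, which expresses the increment of $X^i$ over a general subinterval through increments over dyadic ones. Concretely, each subinterval $[t_{l-1},t_l]$ of an arbitrary partition is a union of dyadic intervals, and resolving it to depth $m$ requires only $O(m)$ dyadic pieces (two per level, from the left and right dyadic staircases). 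Applying Chen's identity over this dyadic skeleton, followed by the power-mean inequality $(\sum_{k=1}^{K}a_k)^{p/j}\leqslant K^{p/j-1}\sum_k a_k^{p/j}$, reduces everything to weighted sums of $p/j$-th powers of dyadic increments of degrees $j=1,\dots,i$. The polynomial weight $m^\gamma$ in $\rho_j$ is exactly what absorbs the $O(m)$ growth of the dyadic count across scales: the worst case is the lowest degree $j=1$, where $p/j=p$ and the power-mean factor is of order $m^{p-1}$, so convergence of the scale sum $\sum_m m^{p-1-\gamma}$ forces the sharp threshold $\gamma>p-1$ and produces the constant $C(p,\gamma)$.

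The main obstacle I expect lies in part (2): handling a genuinely arbitrary partition requires carefully grouping its subintervals by dyadic scale, tracking how many dyadic pieces of each level arise, and verifying that $m^\gamma$ dominates the accumulated multiplicities \emph{uniformly} over all partitions — this is where the precise threshold $\gamma>p-1$ must be extracted and where the bookkeeping is delicate. Part (1), by contrast, is conceptually routine once the neoclassical inequality is granted; its only subtle point is the accounting that turns the pigeonhole coarsening into the explicit series $\sum_l(2/l)^{(\lfloor p\rfloor+1)/p}$ matching the stated constant $\beta$ in (\ref{eq: beta}).
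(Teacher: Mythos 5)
First, a point of reference: the paper does not prove this theorem at all — it is quoted with a citation to Lyons and Qian \cite{LQ02} — so your proposal has to be judged against the standard arguments there. Your part (1) is essentially that standard proof of Lyons' extension theorem (define $X^{n+1}$ as a limit over coarsenings of partitions, bound a single point-removal by the neoclassical inequality, locate a removable point by pigeonhole via superadditivity of $\omega$), and it is sound up to one accounting slip: replacing the exponent $(n+1)/p$ by $(\lfloor p\rfloor+1)/p$ is only legitimate when $2/(l-1)\leqslant1$, i.e.\ for removals from partitions with at least three subintervals; the final removal must be estimated separately using $\omega(u_{0},u_{2})\leqslant\omega(s,t)$, and this is precisely what the ``$1+$'' in (\ref{eq: beta}) pays for — your displayed bound and your closing condition on $\beta$ both omit it, although the hypothesis you invoke contains it. (Also, since the theorem presupposes that the signature path already exists, you should add a line identifying your constructed $X^{n+1}$ with the given one via uniqueness of multiplicative extensions.)

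Part (2) has a genuine gap, exactly at the step you yourself flag as delicate. The unweighted power-mean inequality $(\sum_{k=1}^{K}a_k)^{p/j}\leqslant K^{p/j-1}\sum_k a_k^{p/j}$ cannot do the job, for two reasons. First, the supremum in the statement is over arbitrary finite partitions, whose points need not be dyadic; the dyadic resolution of a generic $[t_{l-1},t_l]$ consists of \emph{infinitely} many pieces (two per scale, for every scale), so $K=\infty$ and the inequality is vacuous. Second, even after truncating at depth $M$ (so $K=O(M)$), the factor $K^{p-1}\sim M^{p-1}$ (worst case $j=1$) multiplies every term, including the pieces at coarse scales $m\ll M$, and cannot be absorbed into the weights $m^{\gamma}$ uniformly in $M$; moreover your final count is off in any case, since $\sum_m m^{p-1-\gamma}<\infty$ iff $\gamma>p$, not $\gamma>p-1$, so even the most optimistic reading of this bookkeeping proves the statement only for the non-sharp range $\gamma>p$. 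The missing tool is H\"older's inequality with weights, which is what Hambly--Lyons and \cite{LQ02} actually use: with $m_k$ the dyadic scale of the $k$-th piece and $j<p$,
\[
\Bigl(\sum_k a_k\Bigr)^{\frac{p}{j}}\leqslant\Bigl(\sum_k m_k^{-\frac{\gamma j}{p-j}}\Bigr)^{\frac{p-j}{j}}\sum_k m_k^{\gamma}a_k^{\frac{p}{j}},
\]
where the first factor is bounded by $\bigl(2\sum_{m\geqslant1}m^{-\gamma j/(p-j)}\bigr)^{(p-j)/j}$ because at most two pieces occur per scale; this is finite iff $\gamma>(p-j)/j$, and the worst case $j=1$ yields exactly the threshold $\gamma>p-1$. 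The weights $m_k^{\gamma}$ then match term-by-term the weights appearing in $\rho_j$, each dyadic interval being used by at most one partition interval, after which the rest of your Chen-identity reduction (including the cross terms for $i\geqslant2$, which produce the lower-degree $\rho_j$'s) goes through.
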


As we are interested in the Brownian rough path, throughout the rest,
we always fix $2<p<3$ and the two constants $\beta,\gamma$ arising
from Theorem \ref{thm: technique of controlling higher degree signature components}.
For simplicity, we always omit the dependence on $p,\beta$ and $\gamma$
for a constant, and the value of a constant may change from line to
line even the same notation is used.

\subsection{A Maximal Type Capacity Estimate}

As the first step, we are going to estimate the $(r,q)$-capacity
of the event $\{w:\max_{t\in[t_{0},t_{1}]}\left\Vert S_{n}(\mathbf{w})_{0,t}-S_{n}(\mathbf{w})_{0,t_{0}}\right\Vert >\eta\},$
where $[t_{0},t_{1}]$ is a dyadic sub-interval of $[0,1]$ (i.e.
$[t_{0},t_{1}]=[(k-1)/2^{m},k/2^{m}]$ for some $k,m$). Our main
idea is to control the maximal function by the series defined by (\ref{eq: rho function}),
and to observe that the increments $w_{s,t}^{i}$ ($i=1,2$) are ``evenly
distributed'' over a dyadic partition.

For $m\geqslant1,$ let $\mathbf{w}^{(m)}=(1,w^{(m),1},w^{(m),2})$
be the lifting of the dyadic piecewise linear interpolation of $w|_{[t_{0},t_{1}]}$
over the dyadic partition of $[t_{0},t_{1}]$ into small intervals
of length $1/2^{m}.$ In other words, $w^{(m),1}$ is just the increment
process of the interpolation, while $w^{(m),2}$ is defined by second
order iterated integrals of the interpolation.
\begin{lem}
\label{lem: L^2 estimates}We have the following estimates: 
\[
\sup_{m\geqslant1}\|w_{t_{0},t_{1}}^{(m),i}\|_{L^{2}}\leqslant C_{d}|t_{1}-t_{0}|^{\frac{i}{2}},\ \mathrm{for}\ i=1,2,
\]
and 
\[
\|w_{t_{0},t_{1}}^{(m+1),2}-w_{t_{0},t_{1}}^{(m),2}\|_{L^{2}}\leqslant C_{d}\frac{|t_{1}-t_{0}|}{2^{m/2}},\ \mathrm{for}\ m\geqslant1,
\]
where $C_{d}$ is a constant depending only on the dimension $d.$\end{lem}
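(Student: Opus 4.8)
The plan is to analyze the two interpolations $\mathbf{w}^{(m)}$ and
$\mathbf{w}^{(m+1)}$ directly in terms of Brownian increments over the
dyadic partition of $[t_0,t_1]$. Write $\Delta_m = |t_1-t_0|/2^m$ for the
mesh of the level-$m$ partition, and let $\delta_k w = w_{s_{k-1},s_k}$ be
the Brownian increment over the $k$-th subinterval. Since $w^{(m),1}$ is the
increment process of the piecewise linear interpolation, we have
$w^{(m),1}_{t_0,t_1} = w_{t_1} - w_{t_0}$ exactly, independently of $m$, so
the first estimate for $i=1$ is immediate from
$\mathbb{E}[|w_{t_1}-w_{t_0}|^2] = d\,|t_1-t_0|$.

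For the second level, I would first express $w^{(m),2}_{t_0,t_1}$ using
Chen's identity to decompose the second-order iterated integral of the
interpolation into a sum over the dyadic subintervals of the local area
contributions plus the cross terms coming from the tensor products of
earlier increments with the current one. The key point is that over each
straight-line segment the piecewise linear path has \emph{zero} signed area,
so the local area terms vanish and $w^{(m),2}_{t_0,t_1}$ reduces to a sum of
cross terms of the form $w^{(m),1}_{t_0,s_{k-1}} \otimes \delta_k w$ (the
antisymmetric Lévy-area part being what survives after projecting onto the
Lie algebra). Since the Brownian increments over disjoint intervals are
independent and mean zero, the $L^2$ norm of this sum is controlled by
$\sum_k \mathbb{E}[|w_{t_0,s_{k-1}}|^2]\,\mathbb{E}[|\delta_k w|^2]
\lesssim \sum_k (s_{k-1}-t_0)\,\Delta_m \lesssim |t_1-t_0|^2$, which yields
$\|w^{(m),2}_{t_0,t_1}\|_{L^2} \leqslant C_d\,|t_1-t_0|$ uniformly in $m$,
giving the first estimate for $i=2$.

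The most delicate estimate is the third one, the rate of convergence
$\|w^{(m+1),2}_{t_0,t_1} - w^{(m),2}_{t_0,t_1}\|_{L^2} \leqslant C_d
|t_1-t_0|/2^{m/2}$, and this is where I expect the main obstacle to lie. The
idea is that refining the partition from level $m$ to level $m+1$ splits each
subinterval into two halves, and the difference in second-order area between
the coarse segment and the two finer segments is exactly the signed area of
the triangle formed by the three dyadic points on that subinterval. I would
compute this difference explicitly: on the $k$-th coarse interval the
discrepancy is a bilinear expression in the two half-increments, roughly
$\tfrac{1}{2}(\delta^{(1)}_k w \wedge \delta^{(2)}_k w)$ where
$\delta^{(1)}_k, \delta^{(2)}_k$ are the Brownian increments over the two
halves. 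The crucial cancellation is that these local triangle-area terms are
\emph{independent across distinct subintervals} and each has mean zero, so
the $L^2$ norm of their sum is the square root of the sum of their individual
$L^2$ norms squared.

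Each individual triangle area over an interval of length $\Delta_m$ has
second moment of order $\Delta_m^2$ (since area scales like the product of
two independent increments, each of variance $\sim \Delta_m$), so summing
over the $2^m$ subintervals gives
$\sum_{k=1}^{2^m} \mathbb{E}[|\text{area}_k|^2]
\lesssim 2^m \cdot \Delta_m^2 = 2^m (|t_1-t_0|/2^m)^2
= |t_1-t_0|^2 / 2^m$, whose square root is precisely
$C_d\,|t_1-t_0|/2^{m/2}$. The subtlety I must handle carefully is
confirming that no cross terms survive between different subintervals after
refinement — this requires checking that the coarse-level cross terms
$w^{(m),1}_{t_0,s_{k-1}}\otimes \delta_k w$ are matched exactly by the
corresponding finer-level cross terms, so that only the genuinely local
triangle areas contribute to the difference. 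Verifying this telescoping
via a careful bookkeeping of Chen's relation is the computational heart of
the lemma, but once the independence and mean-zero structure of the residual
area terms is established, the stated $L^2$ bound follows by the orthogonality
argument above.
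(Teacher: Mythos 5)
Your argument is essentially the standard computation that the paper itself does not reproduce (it simply cites Lyons--Qian, Proposition 4.4.1, and \cite{BGQ14}, Lemma 2.3), and the heart of it --- the telescoping identity
\[
w^{(m+1),2}_{t_0,t_1}-w^{(m),2}_{t_0,t_1}=\tfrac12\sum_{k=1}^{2^m}\left(\delta^{(1)}_k w\otimes\delta^{(2)}_k w-\delta^{(2)}_k w\otimes\delta^{(1)}_k w\right),
\]
together with independence and mean-zero of the summands, hence $L^2$-orthogonality and the bound $2^m\cdot\Delta_m^2=|t_1-t_0|^2/2^m$ --- is correct and is exactly how the cited result is proved. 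One imprecision worth fixing: in your second paragraph you assert that the local (single-segment) terms vanish because a line segment has zero signed area. That is true only for the antisymmetric part; the full second-order iterated integral of a straight segment is $\tfrac12\,\delta_k w\otimes\delta_k w\neq 0$, and the lemma concerns the full tensor $w^{(m),2}$, not just the L\'evy area. This does not endanger either estimate: the symmetric part of $w^{(m),2}_{t_0,t_1}$ equals $\tfrac12\,w^1_{t_0,t_1}\otimes w^1_{t_0,t_1}$ by the shuffle identity (so its $L^2$ norm is $C_d|t_1-t_0|$, and it is independent of $m$ so it cancels in the difference), or alternatively the crude triangle inequality $\sum_k\|\tfrac12\,\delta_kw\otimes\delta_kw\|_{L^2}\leqslant C_d\sum_k\Delta_m=C_d|t_1-t_0|$ suffices; and your own bookkeeping in the final paragraph in fact carries these symmetric terms correctly, since the $\tfrac12(\delta^{(1)}_kw)^{\otimes2}+\tfrac12(\delta^{(2)}_kw)^{\otimes2}$ contributions cancel against the expansion of $\tfrac12(\delta_kw)^{\otimes2}$, leaving precisely the antisymmetric triangle areas. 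With that one sentence corrected, the proof is complete.
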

\begin{proof}
See \cite{LQ02}, Proposition 4.4.1 (and also \cite{BGQ14}, Lemma
2.3).
\end{proof}

\begin{lem}
\label{lem: Sobolev estimate}Suppose $N\in\mathbb{N}.$ Then for
$i=1,2,$ we have $|w_{t_{0},t_{1}}^{i}|^{2N}\in\oplus_{j=0}^{2iN}\mathcal{H}_{j}$
and 
\[
\||w_{t_{0},t_{1}}^{i}|^{2N}\|_{4N,q}\leqslant C_{N,q,d}|t_{1}-t_{0}|^{iN},
\]
where $\mathcal{H}_{j}$ is the $j$-th Wiener-Itô chaos and $C_{N,q,d}$
is a constant depending only on $N,q$ and $d.$\end{lem}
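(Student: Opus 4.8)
The plan is to reduce everything to three standard structural facts about Wiener chaos: the chaos-degree bound under multiplication, hypercontractivity of the Ornstein--Uhlenbeck semigroup, and Meyer's inequalities. First I would establish the chaos membership $|w_{t_0,t_1}^{i}|^{2N}\in\oplus_{j=0}^{2iN}\mathcal{H}_{j}$. For $i=1$ each component of $w_{t_0,t_1}^{1}$ is the increment of a coordinate Brownian motion, hence lies in the first chaos $\mathcal{H}_{1}$; for $i=2$ each component is a second-order iterated integral, which for off-diagonal indices is a double Wiener integral in $\mathcal{H}_{2}$, and for diagonal indices equals $\tfrac{1}{2}((\Delta w^{j})^{2}-\Delta t)\in\mathcal{H}_{0}\oplus\mathcal{H}_{2}$ by It\^{o}'s formula. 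Thus every component of $w_{t_0,t_1}^{i}$ lies in $\oplus_{j=0}^{i}\mathcal{H}_{j}$. Squaring and summing gives $|w_{t_0,t_1}^{i}|^{2}\in\oplus_{j=0}^{2i}\mathcal{H}_{j}$, and raising to the $N$-th power gives $|w_{t_0,t_1}^{i}|^{2N}\in\oplus_{j=0}^{2iN}\mathcal{H}_{j}$, using the elementary fact that the product of an element of $\oplus_{j\leqslant a}\mathcal{H}_{j}$ and an element of $\oplus_{j\leqslant b}\mathcal{H}_{j}$ lies in $\oplus_{j\leqslant a+b}\mathcal{H}_{j}$.

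For the norm estimate, I would exploit that the functional lives in a fixed finite sum of chaoses of total degree $K=2iN\leqslant 4N$. On such a space all the relevant norms are comparable. By Meyer's inequalities the $(r,q)$-Sobolev norm of $F$ is equivalent, up to a constant depending only on $r$ and $q$, to the $L^{q}$ norm of $(I+L)^{r/2}F$, where $L$ is the Ornstein--Uhlenbeck number operator. Since $L$ acts as multiplication by $j$ on $\mathcal{H}_{j}$, on $\oplus_{j=0}^{K}\mathcal{H}_{j}$ the operator $(I+L)^{r/2}$ has norm at most $(1+K)^{r/2}$ on every $L^{q}$; and by hypercontractivity the $L^{q}$ norm is controlled by the $L^{2}$ norm with a constant depending only on $K$ and $q$. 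Combining these with $r=4N$ and $K=2iN$ yields
\[
\||w_{t_0,t_1}^{i}|^{2N}\|_{4N,q}\leqslant C_{N,q}\,\||w_{t_0,t_1}^{i}|^{2N}\|_{L^{2}},
\]
so it remains only to control the $L^{2}$ norm of $|w_{t_0,t_1}^{i}|^{2N}$, equivalently the $(4N)$-th moment of $|w_{t_0,t_1}^{i}|$.

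This last moment bound I would obtain from Lemma \ref{lem: L^2 estimates} together with hypercontractivity. Passing to the limit $m\to\infty$ in the first estimate of Lemma \ref{lem: L^2 estimates} gives $\|w_{t_0,t_1}^{i}\|_{L^{2}}\leqslant C_{d}|t_{1}-t_{0}|^{i/2}$, and since $w_{t_0,t_1}^{i}$ lies in the fixed finite chaos $\oplus_{j\leqslant i}\mathcal{H}_{j}$, hypercontractivity upgrades this to $\|w_{t_0,t_1}^{i}\|_{L^{4N}}\leqslant C_{N,d}|t_{1}-t_{0}|^{i/2}$. Therefore
\[
\||w_{t_0,t_1}^{i}|^{2N}\|_{L^{2}}=\|w_{t_0,t_1}^{i}\|_{L^{4N}}^{2N}\leqslant C_{N,d}\,|t_{1}-t_{0}|^{iN},
\]
which combined with the previous display gives the claimed bound with a constant $C_{N,q,d}$.

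I expect the only genuine point requiring care to be the bookkeeping of constants: one must verify that all comparison constants (from Meyer's inequalities, from the action of $(I+L)^{r/2}$, and from hypercontractivity) depend solely on the fixed data $N,q,d$, and in particular are uniform in the interval $[t_{0},t_{1}]$ and in the interpolation level $m$. The chaos-membership step for $i=2$ also deserves a careful statement so that the diagonal iterated integrals are correctly placed in $\mathcal{H}_{0}\oplus\mathcal{H}_{2}$; everything else is a routine application of the finite-chaos equivalence of norms.
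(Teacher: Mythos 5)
Your proof is correct, but it takes a different route from the paper's. You identify the chaos decomposition of $w^{2}_{t_{0},t_{1}}$ directly via It\^o's formula (off-diagonal components in $\mathcal{H}_{2}$, diagonal ones in $\mathcal{H}_{0}\oplus\mathcal{H}_{2}$ --- note the diagonal entry is $\tfrac12(\Delta w^{j})^{2}$, whose $\mathcal{H}_{2}$-part is $\tfrac12((\Delta w^{j})^{2}-\Delta t)$, a harmless slip in your write-up), and then apply the finite-chaos package (Meyer's inequalities plus hypercontractivity) to the limit object itself, reducing everything to the $L^{4N}$-moment of $w^{i}_{t_{0},t_{1}}$. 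The paper instead never writes down the chaos expansion of $w^{2}$ explicitly: it works with the piecewise-linear lifts $w^{(m),2}$, shows via a telescoping difference and Lemma \ref{lem: L^2 estimates} that $|w^{(m),2}_{t_{0},t_{1}}|^{2N}$ is Cauchy in $L^{2}$ and hence in $\mathbb{D}_{4N,q}$ (using the same equivalence of Sobolev and $L^{2}$ norms on a fixed finite chaos, quoted as Lemma 2.2 of \cite{BGQ14}), and obtains both the chaos membership and the norm bound by passing to the limit. The two arguments rest on identical structural inputs, and yours is somewhat shorter for the lemma as stated; what the paper's approximation route buys in addition is the convergence $|w^{(m),i}_{t_{0},t_{1}}|^{2N}\rightarrow|w^{i}_{t_{0},t_{1}}|^{2N}$ in $\mathbb{D}_{4N,q}$, which is invoked later (in the proof of Proposition \ref{prop: maximal estimates}) to conclude that $|w^{i}|^{2N}$ is $(4N,q)$-quasi-continuous; if you follow your route you would need to supply that convergence separately. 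Also be aware that your final step quietly uses that $w^{(m),2}_{t_{0},t_{1}}\rightarrow w^{2}_{t_{0},t_{1}}$ in $L^{2}$ (so the bound of Lemma \ref{lem: L^2 estimates} survives the limit); this is exactly what the second estimate of that lemma provides, so it should be cited there rather than treated as automatic.
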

\begin{proof}
We only need to consider the case when $i=2,$ as $w_{t_{0},t_{1}}^{1}$
is just the increment of Brownian motion in which case the assertion
is obvious.

First of all, we have 
\begin{eqnarray*}
\left|\left|w_{t_{0},t_{1}}^{(m+1),2}\right|^{2N}-\left|w_{t_{0},t_{1}}^{(m),2}\right|^{2N}\right| & \leqslant & \left|w_{t_{0},t_{1}}^{(m+1),2}-w_{t_{0},t_{1}}^{(m),2}\right|\\
 &  & \cdot\sum_{k=0}^{2N-1}\left|w_{t_{0},t_{1}}^{(m+1),2}\right|^{k}\left|w_{t_{0},t_{1}}^{(m),2}\right|^{2N-1-k}.
\end{eqnarray*}
From the hypercontractivity of the Ornstein-Uhlenbeck semigroup, it
is well known that the $L^{q}$ ($q>1$) and $L^{2}$-norms are comparable
over a given Wiener-Itô chaos. In particular, we have (see \cite{Shigekawa98},
Proposition 2.14): 
\begin{equation}
\|F\|_{L^{q}}\leqslant C_{N,q}\|F\|_{L^{2}}\label{eq: comparability of q and 2 norms}
\end{equation}
for any $F\in\oplus_{j=0}^{N}\mathcal{H}_{j}$. Since $|w_{t_{0},t_{1}}^{(m),2}|^{2k}\in\oplus_{j=0}^{4k}\mathcal{H}_{j},$
by using Lemma \ref{lem: L^2 estimates} and (\ref{eq: comparability of q and 2 norms}),
it is straight forward to see that 
\begin{equation}
\left\Vert \left|w_{t_{0},t_{1}}^{(m+1),2}\right|^{2N}-\left|w_{t_{0},t_{1}}^{(m),2}\right|^{2N}\right\Vert _{L^{2}}\leqslant C_{N,d}\frac{|t_{1}-t_{0}|^{2N}}{2^{m/2}}.\label{eq: L^2 norm of polynomial difference}
\end{equation}
Therefore, $\left|w_{t_{0},t_{1}}^{(m),2}\right|^{2N}$ converges
in $L^{2}$ as $m\rightarrow\infty.$ This shows that $|w_{t_{0},t_{1}}^{2}|^{2N}\in\oplus_{j=0}^{4N}\mathcal{H}_{j}$
as it is the almost-sure limit of $\left|w_{t_{0},t_{1}}^{(m),2}\right|^{2N}$. 

Moreover, the Sobolev norm of $\left|w_{t_{0},t_{1}}^{(m+1),2}\right|^{2N}-\left|w_{t_{0},t_{1}}^{(m),2}\right|^{2N}$
can by controlled by the $L^{2}$-norm uniformly as they are polynomials
of a fixed degree (see \cite{BGQ14}, Lemma 2.2) . In particular,
we obtain from (\ref{eq: L^2 norm of polynomial difference}) that
\[
\left\Vert \left|w_{t_{0},t_{1}}^{(m+1),2}\right|^{2N}-\left|w_{t_{0},t_{1}}^{(m),2}\right|^{2N}\right\Vert _{4N,q}\leqslant C_{N,q,d}\frac{|t_{1}-t_{0}|^{2N}}{2^{m/2}}.
\]
This implies that $w_{t_{0},t_{1}}^{(m),2}\rightarrow w_{t_{0},t_{1}}^{2}$
in $\mathbb{D}_{4N,q}$ as $m\rightarrow\infty.$

Therefore, by Lemma \ref{lem: L^2 estimates} we have 
\begin{eqnarray*}
\||w_{t_{0},t_{1}}^{2}|^{2N}\|_{4N,q} & = & \lim_{m\rightarrow\infty}\||w_{t_{0},t_{1}}^{(m),2}|^{2N}\|_{4N,q}\\
 & \leqslant & C_{N,q}\lim_{m\rightarrow\infty}\|w_{t_{0},t_{1}}^{(m),2}\|_{L^{2}}^{2N}\\
 & \leqslant & C_{N,q,d}|t_{1}-t_{0}|^{2N}.
\end{eqnarray*}

\end{proof}

Now we are able to establish the required maximal capacity estimate.
\begin{prop}
\label{prop: maximal estimates}(1) Let $\alpha_{p}=\sum_{j=1}^{\infty}1/(\beta(j/p)!)<1.$
Then for any $n\in\mathbb{N},$ we have:
\begin{eqnarray}
 &  & \mathrm{Cap}_{r,q}\left(\max_{t_{0}\leqslant t\leqslant t_{1}}\|S_{n}(\mathbf{w})_{t_{0},t}-\mathbf{1}\|>\alpha\right)\nonumber \\
 & \leqslant & \begin{cases}
C_{N,q,d}\frac{|t_{1}-t_{0}|^{N}}{\alpha^{2N}}\left(1+\frac{|t_{1}-t_{0}|^{N}}{\alpha^{2N}}\right), & \mathrm{if}\ 0<\alpha\leqslant\alpha_{p};\\
C_{N,q,d}\frac{|t_{1}-t_{0}|^{N}}{\alpha^{2N/n}}\left(1+\frac{|t_{1}-t_{0}|^{N}}{\alpha^{2N/n}}\right), & \mathrm{if}\ \alpha>\alpha_{p},
\end{cases}\label{eq: first maximal inequality}
\end{eqnarray}
where $N>r$ and $C_{N,q,d}$ is a constant depending only on $N,q,d$. 

(2) Suppose $N>r$ and $\delta>0.$ Then for any $n\in\mathbb{N}$
and $0<\eta<2^{-1/\delta}\wedge\alpha_{p}$, we have
\begin{align}
 & \mathrm{\ Cap}_{r,q}\left(\max_{t_{0}\leqslant t\leqslant t_{1}}\|S_{n}(\mathbf{w})_{0,t}-S_{n}(\mathbf{w})_{0,t_{0}}\|>\eta\right)\nonumber \\
\leqslant & \ C_{N,q,d}\left(\frac{|t_{1}-t_{0}|^{N}}{\eta^{2N(1+\delta)}}\cdot\left(1+\frac{|t_{1}-t_{0}|^{N}}{\eta^{2N(1+\delta)}}\right)+\eta^{2N\delta/n}\right),\label{eq: second maximal inequality}
\end{align}
where $C_{N,q,d}$ is a constant depending only on $N,q,d.$ \end{prop}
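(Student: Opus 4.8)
The plan is to prove Proposition \ref{prop: maximal estimates} by first controlling the maximal functional $\max_{t_0 \leqslant t \leqslant t_1}\|S_n(\mathbf{w})_{t_0,t} - \mathbf{1}\|$ in terms of the $\rho$-functionals from Theorem \ref{thm: technique of controlling higher degree signature components}, and then using the Sobolev estimates of Lemma \ref{lem: Sobolev estimate} together with the Chebyshev inequality (\ref{eq: Chebyshev inequality}) to bound the capacity. The key structural observation is that the signature path $S_n(\mathbf{w})_{t_0,t}$ takes values in the truncated tensor algebra, so its norm is controlled degree by degree; by part (2) of the controlling theorem, for a weakly geometric rough path the supremum over partitions of $\sum_l |X^i_{t_{l-1},t_l}|^{p/i}$ is dominated by $C\sum_{j=1}^i \rho_j(\mathbf{w};t_0,t_1)$. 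Since the increment $S_n(\mathbf{w})_{t_0,t}$ over a subinterval is itself a product of signature components, controlling the maximal deviation reduces to controlling each $\rho_j$, $j=1,2$ (recall $2<p<3$ forces $\lfloor p\rfloor = 2$).

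First I would establish the maximal bound in part (1). The strategy is to show $\max_{t_0 \leqslant t \leqslant t_1}\|S_n(\mathbf{w})_{t_0,t} - \mathbf{1}\| > \alpha$ forces at least one of the $\rho_j$ to be large. More precisely, if every $|w^i_{t_0,t_1}|$ is small relative to a control $\omega$ satisfying (\ref{eq: Lyons' lifting}), then Lyons' lifting propagates the bound to all degrees, and the full signature stays within $\alpha_p$ of $\mathbf{1}$; thus exceeding $\alpha_p$ (or $\alpha$ when $\alpha \leqslant \alpha_p$) compels $\sum_{j}\rho_j$ to exceed a threshold comparable to a power of $\alpha$ (namely $\alpha^2$ in the small regime and $\alpha^{2/n}$ in the large regime, accounting for the degree-$n$ scaling). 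I would then apply Chebyshev (\ref{eq: Chebyshev inequality}) to the functional $\sum_j \rho_j$, raised to a suitable power $2N$ to land in a Sobolev space of order exceeding $r$; the Sobolev norm of this power is estimated by Lemma \ref{lem: Sobolev estimate} through the dyadic sum structure of $\rho_j$ in (\ref{eq: rho function}), yielding the factor $|t_1-t_0|^N$. The quadratic $(1 + |t_1-t_0|^N/\alpha^{2N})$ correction arises because $|\sum_j \rho_j|^{2N}$ expands into products whose expectations contribute both a leading and a cross term.

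For part (2), the passage from the interior-relative maximal functional $\|S_n(\mathbf{w})_{t_0,t} - \mathbf{1}\|$ to the absolute one $\|S_n(\mathbf{w})_{0,t} - S_n(\mathbf{w})_{0,t_0}\|$ is the technical crux. These differ because $S_n(\mathbf{w})_{0,t} - S_n(\mathbf{w})_{0,t_0}$ is not simply $S_n(\mathbf{w})_{t_0,t} - \mathbf{1}$ but involves the left-multiplication by $S_n(\mathbf{w})_{0,t_0}$; the two are related through the multiplicative (Chen) identity $S_n(\mathbf{w})_{0,t} = S_n(\mathbf{w})_{0,t_0}\otimes S_n(\mathbf{w})_{t_0,t}$. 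I would bound $\|S_n(\mathbf{w})_{0,t} - S_n(\mathbf{w})_{0,t_0}\|$ by $\|S_n(\mathbf{w})_{0,t_0}\|\cdot\|S_n(\mathbf{w})_{t_0,t} - \mathbf{1}\|$ and split the event according to whether $\|S_n(\mathbf{w})_{0,t_0}\|$ is controlled or not. The parameter $\delta$ enters precisely here: one inflates the exponent from $\alpha$ to $\eta^{1+\delta}$ to absorb the norm of the initial signature, and the extra additive term $\eta^{2N\delta/n}$ accounts for the capacity of the bad event where $\|S_n(\mathbf{w})_{0,t_0}\|$ is atypically large, estimated again via Chebyshev and Lemma \ref{lem: Sobolev estimate}.

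The main obstacle I anticipate is handling the interaction between the multiplicative structure and the capacity estimates cleanly: unlike a probability bound, the capacity of an intersection or product event does not factor, so I must control the product $\|S_n(\mathbf{w})_{0,t_0}\|\cdot\|S_n(\mathbf{w})_{t_0,t}-\mathbf{1}\|$ through Sobolev norms rather than by independence or conditioning. This requires that the power $2N$ in the Chebyshev step be chosen large enough (with $N>r$) that the relevant functional lies in $\mathbb{D}_{r,q}$ with a computable norm, while simultaneously making the exponents of $\eta$ match the claimed bound. Balancing $\delta$ against $N$ so that both the principal term $|t_1-t_0|^N/\eta^{2N(1+\delta)}$ and the remainder $\eta^{2N\delta/n}$ come out with the stated powers is the delicate bookkeeping that will occupy most of the argument.
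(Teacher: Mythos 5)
Your overall architecture for part (1) --- pass from the maximal functional to the control $\omega$, from $\omega$ to the functionals $\rho_1,\rho_2$ via Theorem \ref{thm: technique of controlling higher degree signature components}, and then estimate capacities by Chebyshev plus Sobolev bounds --- is the paper's, and your part (2) (multiplicativity, splitting on $\|S_n(\mathbf{w})_{0,t_0}\|>\eta^{-\delta}$, applying part (1) with $\alpha=\eta^{1+\delta}$ and with $\alpha=\eta^{-\delta}/2$ on $[0,1]$) is exactly right. The gap is in the single step where you propose to apply the capacity Chebyshev inequality (\ref{eq: Chebyshev inequality}) to $\bigl(\sum_j\rho_j\bigr)^{2N}$. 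That inequality requires the functional to lie in $\mathbb{D}_{r,q}$ and to be $(r,q)$-quasi-continuous, and $\rho_j(\mathbf{w};t_0,t_1)=\sum_m m^\gamma\sum_k|w^j_{t_m^{k-1},t_m^k}|^{p/j}$ is a sum of \emph{fractional} powers ($p\in(2,3)$, so neither $p/1$ nor $p/2$ is an even integer); raising the sum to the power $2N$ does not produce a polynomial functional, since the expansion contains cross terms such as $|w^j_{I}|^{p/j}|w^j_{J}|^{p/j}\cdots$ that retain fractional exponents and hence are not known to have Malliavin derivatives of order $r\geqslant2$. (Compare the paper's own remark on why a Fernique-type argument on the $p$-variation fails in the capacity setting: it is the Sobolev norm, not the $L^q$-norm, that enters Chebyshev.) Lemma \ref{lem: Sobolev estimate} controls the Sobolev norm of $|w^i_{s,t}|^{2N}$ for a single increment, not of powers of $\rho_j$, so your appeal to it at this point does not go through; nor does taking a power ``increase the Sobolev order'' --- membership in $\mathbb{D}_{4N,q}$ comes from $|w^i_{s,t}|^{2N}$ being a polynomial lying in a finite Wiener--It\^o chaos.

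The missing idea is a dyadic union bound performed \emph{before} Chebyshev: one distributes the threshold $C\lambda$ over the levels of the sum (\ref{eq: rho function}) as $C_\theta\lambda 2^{-m\theta}$ for a small $\theta>0$, so that $\{\rho_i>C\lambda\}$ is contained in the union over $m$ and $k$ of the events $\{|w^i_{t_m^{k-1},t_m^k}|^{p/i}>C_\theta\lambda 2^{-m(\theta+1)}\}$, and only then applies (\ref{eq: Chebyshev inequality}) to each polynomial functional $|w^i_{t_m^{k-1},t_m^k}|^{2N}$ together with Lemma \ref{lem: Sobolev estimate} and the sub-additivity of the capacity; the resulting double series converges precisely when $\theta$ is small and $N$ is large, and this is where the bound $|t_1-t_0|^{iN}\lambda^{-2iN/p}$ for $\mathrm{Cap}_{r,q}(\rho_i>C\lambda)$ comes from. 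Incidentally, this also corrects your account of the factor $1+|t_1-t_0|^{N}/\alpha^{2N}$ in (\ref{eq: first maximal inequality}): it is not produced by cross terms in an expansion of $\bigl(\sum_j\rho_j\bigr)^{2N}$, but simply by adding the two bounds $|t_1-t_0|^{N}\lambda^{-2N/p}$ (from $\rho_1$) and $|t_1-t_0|^{2N}\lambda^{-4N/p}$ (from $\rho_2$).
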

\begin{proof}
(1) Let $C_{p}=(1/p)!+(2/p)!$ and $\beta$ be given by (\ref{eq: beta}).
Define a control function $\omega(s,t)$ by 
\begin{equation}
\omega(s,t)=\beta C_{p}\sum_{i=1}^{2}\sup_{\mathcal{P}([s,t])}\sum_{l}|w_{t_{l-1},t_{l}}^{i}|^{\frac{p}{i}},\ 0\leqslant s\leqslant t\leqslant1.\label{eq: control function}
\end{equation}
Since $\mathbf{w}$ is a quasi-surely well defined $p$-rough path,
according to Theorem \ref{thm: technique of controlling higher degree signature components}
(1), we have 
\[
\max_{t_{0}\leqslant t\leqslant t_{1}}|w_{t_{0},t}^{i}|\leqslant\frac{\omega(t_{0},t_{1})^{i/p}}{\beta(i/p)!}
\]
for all $i\geqslant1,$ where $w^{i}$ denotes the $i$-th degree
component of the Brownian signature path. For the moment let $\lambda>0$
be such that 
\[
\sum_{i=1}^{n}\frac{\lambda^{i/p}}{\beta(i/p)!}\leqslant\alpha
\]
for given $\alpha>0.$ It follows that
\begin{align}
 & \left\{ w:\max_{t_{0}\leqslant t\leqslant t_{1}}\|S_{n}(\mathbf{w})_{t_{0},t}-\mathbf{1}\|>\alpha\right\} \nonumber \\
= & \left\{ w:\max_{t_{0}\leqslant t\leqslant t_{1}}\left(\sum_{i=1}^{n}|w_{t_{0},t}^{i}|^{2}\right)>\alpha^{2}\right\} \nonumber \\
\subseteq & \left\{ w:\sum_{i=1}^{n}\max_{t_{0}\leqslant t\leqslant t_{1}}|w_{t_{0},t}^{i}|^{2}>\sum_{i=1}^{n}\frac{\lambda^{2i/p}}{(\beta(i/p)!)^{2}}\right\} \nonumber \\
\subseteq & \bigcup_{i=1}^{n}\left\{ w:\max_{t_{0}\leqslant t\leqslant t_{1}}|w_{t_{0},t}^{i}|>\frac{\lambda^{i/p}}{\beta(i/p)!}\right\} \nonumber \\
\subseteq & \{w:\omega(t_{0},t_{1})>\lambda\}.\label{eq: controlling maximal by control}
\end{align}
Let $\rho_{i}(\mathbf{w};t_{0},t_{1})$ ($i=1,2$) be given by (\ref{eq: rho function}).
According to Theorem \ref{thm: technique of controlling higher degree signature components}
(2), we obtain that
\begin{eqnarray}
\{w:\omega(t_{0},t_{1})>\lambda\} & \subseteq & \left\{ w:\rho_{1}(\mathbf{w};t_{0},t_{1})>C\lambda\right\} \nonumber \\
 &  & \bigcup\left\{ w:\rho_{2}(\mathbf{w};t_{0},t_{1})>C\lambda\right\} ,\label{eq: controlling control by rho}
\end{eqnarray}
where $C>0$ is some constant depending only on $p$ and $\gamma$
in that theorem. 

For $\theta>0$, let $C_{\theta}>0$ be a constant such that 
\[
C_{\theta}\sum_{m=1}^{\infty}m^{\gamma}2^{-m\theta}\leqslant C.
\]
It follows that 
\begin{eqnarray*}
\left\{ w:\rho_{i}(\mathbf{w};t_{0},t_{1})>C\lambda\right\}  & \subseteq & \bigcup_{m=1}^{\infty}\left\{ w:\sum_{k=1}^{2^{m}}\left|w_{t_{m}^{k-1},t_{m}^{k}}^{i}\right|^{\frac{p}{i}}>C_{\theta}\lambda2^{-m\theta}\right\} \\
 & \subseteq & \bigcup_{m=1}^{\infty}\bigcup_{k=1}^{2^{m}}\left\{ w:\left|w_{t_{m}^{k-1},t_{m}^{k}}^{i}\right|^{\frac{p}{i}}>C_{\theta}\lambda2^{-m(\theta+1)}\right\} .
\end{eqnarray*}
Therefore, for any $N>r,$ we have 
\begin{eqnarray*}
 &  & \mathrm{Cap}_{r,q}\left(\rho_{i}(\mathbf{w};t_{0},t_{1})>C\lambda\right)\\
 & \leqslant & \sum_{m=1}^{\infty}\sum_{k=1}^{2^{m}}\mathrm{Cap}_{r,q}\left(\left|w_{t_{m}^{k-1},t_{m}^{k}}^{i}\right|^{\frac{p}{i}}>C_{\theta}\lambda2^{-m(\theta+1)}\right)\\
 & \leqslant & \sum_{m=1}^{\infty}\sum_{k=1}^{2^{m}}\mathrm{Cap}_{4N,q}\left(\left|w_{t_{m}^{k-1},t_{m}^{k}}^{i}\right|^{2N}>(C_{\theta}\lambda)^{\frac{2iN}{p}}2^{-\frac{2im(1+\theta)N}{p}}\right).
\end{eqnarray*}
On the other hand, from the proof of Lemma \ref{lem: Sobolev estimate},
we know that $|w_{t_{m}^{k-1},t_{m}^{k}}^{(l),i}|^{2N}\rightarrow|w_{t_{m}^{k-1},t_{m}^{k}}^{i}|^{2N}$
in $\mathbb{D}_{4N,q}$ as well as quasi-surely when $l\rightarrow\infty.$
Since $|w_{t_{m}^{k-1},t_{m}^{k}}^{(l),i}|^{2N}$ is continuous on
$W,$ according to \cite{Malliavin97}, Theorem 2.3.5, we see that
$|w_{t_{m}^{k-1},t_{m}^{k}}^{i}|^{2N}$ is $(4N,q)$-quasi-continuous.
Therefore, by using the Chebyshev inequality for capacity (\ref{eq: Chebyshev inequality})
and Lemma \ref{lem: Sobolev estimate}, we have
\begin{eqnarray*}
\mathrm{Cap}_{r,q}(\rho_{i}(\mathbf{w};t_{0},t_{1})>C\lambda) & \leqslant & C_{N,q}(C_{\theta}\lambda)^{-\frac{2iN}{p}}\sum_{m=1}^{\infty}2^{\frac{2im(1+\theta)N}{p}}\\
 &  & \cdot\sum_{k=1}^{2^{m}}\left\Vert \left|w_{t_{m}^{k-1},t_{m}^{k}}^{i}\right|^{2N}\right\Vert _{4N,q}\\
 & \leqslant & C_{N,q,d}(C_{\theta}\lambda)^{-\frac{2iN}{p}}|t_{1}-t_{0}|^{iN}\\
 &  & \cdot\sum_{m=1}^{\infty}2^{m\left(iN\left(\frac{2(1+\theta)}{p}-1\right)+1\right)}.
\end{eqnarray*}
Now we choose $\theta$ to be small enough such that 
\[
\left(\frac{2(1+\theta)}{p}-1\right)N+1<0.
\]
This is possible since $2<p<3$. Then we arrive at
\[
\mathrm{Cap}_{r,q}\left(\rho_{i}(\mathbf{w};t_{0},t_{1})>C\lambda\right)\leqslant C_{N,q,d}\frac{|t_{1}-t_{0}|^{iN}}{\lambda^{2iN/p}}.
\]

Combining with (\ref{eq: controlling maximal by control}) and (\ref{eq: controlling control by rho}),
we obtain that 
\begin{eqnarray*}
\mathrm{Cap}_{r,q}\left(\max_{t_{0}\leqslant t\leqslant t_{1}}\|S_{n}(\mathbf{w})_{t_{0},t}-\mathbf{1}\|>\alpha\right) & \leqslant & C_{N,q,d}\frac{|t_{1}-t_{0}|^{N}}{\lambda^{2N/p}}\\
 &  & \cdot\left(1+\frac{|t_{1}-t_{0}|^{N}}{\lambda^{2N/p}}\right).
\end{eqnarray*}
Now (\ref{eq: first maximal inequality}) follows by setting 
\[
\lambda=\begin{cases}
(\alpha/\alpha_{p})^{p}, & \mathrm{if}\ 0<\alpha\leqslant\alpha_{p};\\
(\alpha/\alpha_{p})^{p/n}, & \mathrm{if}\ \alpha>\alpha_{p}.
\end{cases}
\]

(2) From the multiplicative property of a rough path, we know that
\[
S_{n}(\mathbf{w})_{0,t}-S_{n}(\mathbf{w})_{0,t_{0}}=S_{n}(\mathbf{w})_{0,t_{0}}\otimes(S_{n}(\mathbf{w})_{t_{0},t}-\mathbf{1}).
\]
Therefore, for any $\delta>0,$ we have 
\begin{eqnarray*}
 &  & \{w:\max_{t_{0}\leqslant t\leqslant t_{1}}\|S_{n}(\mathbf{w})_{0,t}-S_{n}(\mathbf{w})_{0,t_{0}}\|>\eta\}\\
 & \subseteq & \{w:\|S_{n}(\mathbf{w})_{0,t_{0}}\|>\eta^{-\delta}\}\bigcup\{w:\max_{t_{0}\leqslant t\leqslant t_{1}}\|S_{n}(\mathbf{w})_{t_{0},t}-\mathbf{1}\|>\eta^{1+\delta}\}.
\end{eqnarray*}

If $0<\eta<\alpha_{p}$, then $\eta^{1+\delta}<\alpha_{p}$, and from
(\ref{eq: first maximal inequality}) we conclude that 
\begin{eqnarray*}
\mathrm{Cap}_{r,q}\left(\max_{t_{0}\leqslant t\leqslant t_{1}}\|S_{n}(\mathbf{w})_{t_{0},t}-\mathbf{1}\|>\eta^{1+\delta}\right) & \leqslant & C_{N,q,d}\frac{|t_{1}-t_{0}|^{N}}{\eta^{2N(1+\delta)}}\\
 &  & \cdot\left(1+\frac{|t_{1}-t_{0}|^{N}}{\eta^{2N(1+\delta)}}\right).
\end{eqnarray*}
On the other hand, as $\eta^{-\delta}/2>1>\alpha_{p},$ by applying
(\ref{eq: first maximal inequality}) for the case $[t_{0},t_{1}]=[0,1],$
we obtain that 
\begin{eqnarray*}
\mathrm{Cap}_{r,q}\left(\|S_{n}(\mathbf{w})_{0,t_{0}}\|>\eta^{-\delta}\right) & \leqslant & \mathrm{Cap}_{r,q}\left(\max_{0\leqslant t\leqslant1}\|S_{n}(\mathbf{w})_{0,t}-\mathbf{1}\|>\frac{1}{2}\eta^{-\delta}\right)\\
 & \leqslant & C_{N,q,d}\eta^{\frac{2N\delta}{n}}\left(1+\eta^{\frac{2N\delta}{n}}\right)\\
 & \leqslant & C_{N,q,d}\eta^{\frac{2N\delta}{n}}.
\end{eqnarray*}
Now (\ref{eq: second maximal inequality}) follows immediately.\end{proof}
\begin{rem}
In the probability measure case (i.e. $r=0$), it is possible to strengthen
the maximal inequalities in Proposition \ref{prop: maximal estimates}
to an exponential type by using a Fernique type estimate for the $p$-variation
of the Brownian rough path. However, this approach cannot be applied
to the capacity case as the Chebyshev inequality for capacity involves
the Sobolev norm instead of the $L^{q}$-norm. Indeed, it is even
not clear whether the $p$-variation is differentiable in the sense
of Malliavin.
\end{rem}

\subsection{A Small Ball Capacity Estimate}

The second step is to establish an estimate for the $(r,q)$-capacity
of the event $\{w:|S_{n}(\mathbf{w})_{0,t_{1}}-S_{n}(\mathbf{w})_{0,t_{0}}|\leqslant\eta\},$
where $t_{0},t_{1}$ are two dyadic points in $[0,1].$ The key ingredient
here is to observe the hypoellipticity of a collection of signature
components regarded as a stochastic differential equation (short for
SDE), so that the required estimate will follow from the Malliavin
calculus for hypoelliptic SDEs. It should be pointed out that $S_{n}(\mathbf{w})_{0,t}$,
as a path in the truncated tensor algebra, is not hypoelliptic as
it lives on the free nilpotent Lie group.

Recall from rough path theory (see \cite{FV10}, Proposition 7.8)
that the truncated signature path $S_{n}(\mathbf{w})_{t_{0},t}$ satisfies
the linear differential equation
\begin{equation}
\begin{cases}
dS_{n}(\mathbf{w})_{t_{0},t}=S_{n}(\mathbf{w})_{t_{0},t}\otimes dw_{t},\\
S_{n}(\mathbf{w})_{t_{0},t_{0}}=\mathbf{1}.
\end{cases}\label{eq: the differential equation for signature}
\end{equation}
In our case we can either interpret (\ref{eq: the differential equation for signature})
as a rough differential equation or a Stratonovich type SDE driven
by Brownian motion. Under the canonical basis of $\mathbb{R}^{d},$
we can write it as
\begin{equation}
dw_{t_{0},t}^{I}=w_{t_{0},t}^{I'}\circ dw^{i}\label{eq: SDE in coordinates}
\end{equation}
starting at zero, where $I$ runs over all words over $\{1,\cdots,d\}$
with length at most $n$, and $I'$ is the word obtained by dropping
the last letter $i$. We are interested in a \textit{consistent} collection
$\mathcal{I}$ of words in the sense that $I\in\mathcal{I}\implies I'\in\mathcal{I}$.
\begin{defn}
A word $I$ over $\{1,\cdots,d\}$ is said to be\textit{ non-degenerate
}if it has the form 
\[
I=\left(i_{0},\underset{l_{1}\;\text{copies}}{\underbrace{i_{1},\cdots,i_{1}}},\cdots,\underset{l_{k}\;\text{copies}}{\underbrace{i_{k},\cdots,i_{k}}}\right),
\]
where $1\leqslant i_{0}<i_{1}<\cdots<i_{k}\leqslant d$ and $l_{1},\cdots,l_{k}\geqslant0.$
\end{defn}

The collection $\mathcal{I}_{d,n}$ of non-degenerate words with length
at most $n$ is clearly consistent.
\begin{lem}
The cardinality of $\mathcal{I}_{d,n}$ is given by 
\[
|\mathcal{I}_{d,n}|=\left(\begin{array}{c}
n+d-1\\
n
\end{array}\right).
\]
\end{lem}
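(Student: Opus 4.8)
The plan is to exhibit an explicit bijection between $\mathcal{I}_{d,n}$ and the set of non-decreasing words of length exactly $n$ over $\{1,\dots,d\}$, whose cardinality is the stars-and-bars number $\binom{n+d-1}{n}$. The first thing I would record is a structural reformulation of the definition. Since $i_0<i_1<\cdots<i_k$ and each block repeats a single letter, every non-degenerate word is automatically \emph{non-decreasing}, and its smallest letter $i_0$ occurs exactly once (everything following it is $\geqslant i_1>i_0$). Conversely, any non-decreasing word over $\{1,\dots,d\}$ whose smallest letter appears with multiplicity one is non-degenerate: read $i_0$ off as the first letter, take $i_1<\cdots<i_k$ to be the distinct letters occurring afterwards, and let $l_1,\dots,l_k$ be their multiplicities. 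Hence $\mathcal{I}_{d,n}$ is precisely the set of non-decreasing words of length between $1$ and $n$ in which the minimal letter appears exactly once.

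The core step is the padding map $\Phi$: to a non-degenerate word $I$ of length $m$ with smallest letter $i_0$, assign the word $\Phi(I)$ of length exactly $n$ obtained by prepending $n-m$ additional copies of $i_0$. Then $\Phi(I)$ is non-decreasing of length $n$, i.e. equivalently a multiset of size $n$ drawn from $\{1,\dots,d\}$. The inverse is equally explicit: given a non-decreasing word $J$ of length $n$ whose minimal letter has multiplicity $\mu\geqslant1$, delete $\mu-1$ of those copies. This returns a non-decreasing word of length $n-\mu+1$ whose minimal letter appears once, hence an element of $\mathcal{I}_{d,n}$. Verifying that $\Phi$ and this un-padding map are mutually inverse is routine, and it immediately gives $|\mathcal{I}_{d,n}|$ equal to the number of length-$n$ non-decreasing words, which is $\binom{n+d-1}{n}$ by the standard multiset count.

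The one point deserving genuine care — and where I would spend the verification effort — is confirming that the un-padding map really lands back in the non-degenerate class \emph{and} respects the length constraint $\leqslant n$. Both are in fact automatic rather than imposed: reducing the minimal letter to a single occurrence turns a length-$n$ word into one of length $n-\mu+1$, which always lies in $[1,n]$, so the bound emerges for free; and the resulting word has minimal-letter multiplicity one by construction, so it is non-degenerate. Everything else is bookkeeping, and I would note in passing that an induction on $d$ (peeling off the smallest present letter) or a generating-function argument would give the same count, but the bijection above is the cleanest route.
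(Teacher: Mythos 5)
Your proof is correct, but it takes a genuinely different route from the paper's. You first reformulate $\mathcal{I}_{d,n}$ as the set of non-decreasing words of length between $1$ and $n$ in which the minimal letter occurs exactly once (which is accurate: the condition $i_0<i_1<\cdots<i_k$ forces exactly this), and then you produce a single global bijection with multisets of size $n$ by padding with extra copies of the minimal letter; the slack $n-m$ in the length constraint is absorbed into the multiplicity of the minimum, and the inverse (collapse the minimal letter to one occurrence) is well defined and lands back in $\mathcal{I}_{d,n}$ as you check. The paper instead partitions $\mathcal{I}_{d,n}$ by the first letter $k$, identifies each class $\mathcal{I}_{d,n}(k)$ with the non-negative integer solutions of $x_{k+1}+\cdots+x_d+y=n-1$ (the slack appearing as an explicit extra variable $y$), and then sums the resulting binomial coefficients using the hockey-stick identity. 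The two arguments encode the same slack in different places; your version buys a one-step count with no summation identity needed, while the paper's decomposition by first letter is slightly more mechanical but makes the intermediate quantities $|\mathcal{I}_{d,n}(k)|$ explicit. Either is a complete proof.
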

\begin{proof}
For $1\leqslant k\leqslant d,$ let $\mathcal{I}_{d,n}(k)$ be the set
of words $I\in\mathcal{I}_{d,n}$ whose first letter is $k.$ It is
not hard to see that there is a bijection between $\mathcal{I}_{d,n}(k)$
and the set of non-negative integer solutions to the equation
\[
x_{k+1}+\cdots+x_{d}+y=n-1.
\]
It follows that 
\[
|\mathcal{I}_{d,n}(k)|=\left(\begin{array}{c}
n-1+d-k\\
d-k
\end{array}\right).
\]
Therefore, 
\[
|\mathcal{I}_{d,n}|=\sum_{k=1}^{d}\left(\begin{array}{c}
n-1+d-k\\
d-k
\end{array}\right)=\sum_{l=n}^{n+d-1}\left(\begin{array}{c}
l-1\\
n-1
\end{array}\right).
\]
The last expression is easily seen to be $\left(\begin{array}{c}
n+d-1\\
n
\end{array}\right)$ as it can be modeled by choosing subsets of $\{1,\cdots,n+d-1\}$
with $n$ elements and with $l$ being the largest one.
\end{proof}

Now we have the following result.
\begin{lem}
\label{lem: hypoellipticity}The restriction of (\ref{eq: SDE in coordinates})
to the collection $\mathcal{I}_{d,n}$ of non-degenerate words defines
an $|\mathcal{I}_{d,n}|$-dimensional linear SDE satisfying Hörmander's
condition at the origin in the sense that the linear span of its generating
vector fields and their Lie brackets of any order at the origin is
$\mathbb{R}^{|\mathcal{I}_{d,n}|}$.\end{lem}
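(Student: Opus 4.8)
The plan is to read (\ref{eq: SDE in coordinates}), restricted to the consistent family $\mathcal{I}_{d,n}$, as a drift-free Stratonovich system $dX=\sum_{i=1}^{d}V_{i}(X)\circ dw^{i}$ on $\mathbb{R}^{|\mathcal{I}_{d,n}|}$, whose generating vector fields are the affine fields
\[
V_{i}=\sum_{\substack{I\in\mathcal{I}_{d,n}\\ I\ \mathrm{ends\ in}\ i}}x^{I'}\,\frac{\partial}{\partial x^{I}},\qquad x^{\emptyset}:=1,
\]
so that $V_{i}(0)=\partial/\partial x^{(i)}$. These are precisely the projections onto the $\mathcal{I}_{d,n}$-coordinates of the right-multiplication fields $\widehat{V}_{i}(S)=S\otimes e_{i}$ on the truncated tensor algebra $T^{(n)}(\mathbb{R}^{d})$, and because $\mathcal{I}_{d,n}$ is consistent the coordinate projection $\pi\colon T^{(n)}\to\mathbb{R}^{|\mathcal{I}_{d,n}|}$ intertwines $\widehat{V}_{i}$ with $V_{i}$, i.e.\ $d\pi\circ\widehat{V}_{i}=V_{i}\circ\pi$. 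Hence every iterated Lie bracket of the $V_{i}$ is $\pi$-related to the corresponding bracket of the $\widehat{V}_{i}$.

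First I would evaluate these brackets at the origin. Since $[\widehat{V}_{i},\widehat{V}_{j}](S)=S\otimes[e_{i},e_{j}]$ and the origin corresponds to the group unit $S=\mathbf{1}$, an $m$-fold bracket of the $\widehat{V}_{i}$ at $\mathbf{1}$ equals the free Lie element $[e_{i_{1}},[e_{i_{2}},\dots]]\in(\mathbb{R}^{d})^{\otimes m}$ written in the word basis. Applying $d\pi_{\mathbf{1}}$, the value at the origin of the corresponding bracket of the $V_{i}$ is exactly the projection $P_{\mathcal{I}_{d,n}}$ of that Lie element onto the non-degenerate-word coordinates. Consequently H\"ormander's condition is equivalent, degree by degree, to the purely algebraic statement
\[
P_{\mathcal{I}_{d,n}}\bigl(\mathcal{L}^{\leqslant n}(\mathbb{R}^{d})\bigr)=\mathbb{R}^{|\mathcal{I}_{d,n}|},
\]
where $\mathcal{L}^{\leqslant n}$ denotes the free Lie algebra truncated at degree $n$.

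To prove this surjectivity I would exhibit, for each non-degenerate word $K=(i_{0},i_{1}^{l_{1}},\dots,i_{k}^{l_{k}})$ with $i_{0}<i_{1}<\dots<i_{k}$, an explicit Lie element projecting to the standard basis vector $e_{K}$. The right choice is the left-nested bracket obtained by appending the letters of $K$ in their weakly increasing order through the right adjoint $\mathrm{ad}^{R}_{c}(x)=[x,c]$,
\[
\ell_{K}=\bigl(\mathrm{ad}^{R}_{i_{k}}\bigr)^{l_{k}}\cdots\bigl(\mathrm{ad}^{R}_{i_{1}}\bigr)^{l_{1}}(e_{i_{0}}).
\]
Expanding, every word occurring in $\ell_{K}$ is produced from the seed $i_{0}$ by inserting each successive letter either at the front or at the back; the all-back choice yields $K$ with coefficient $+1$, and it is the only way to produce $K$. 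Any other word has had at least one letter prepended, so its first letter lies in $\{i_{1},\dots,i_{k}\}$ and is therefore strictly larger than $i_{0}$, while the unique smallest letter $i_{0}$ still sits in its interior; such a word is degenerate. Hence $P_{\mathcal{I}_{d,n}}(\ell_{K})=e_{K}$, and letting $K$ range over $\mathcal{I}_{d,n}$ gives surjectivity, which via the reduction above is exactly the asserted H\"ormander condition.

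I expect the genuine obstacle to be the algebraic surjectivity and, concretely, the selection of the correct bracketing: the naive right-nested bracket $[e_{i_{0}},[e_{i_{1}},[\dots]]]$ vanishes as soon as $K$ has a repeated letter (e.g.\ $[e_{b},e_{b}]=0$ kills $(a,b,b)$), so one cannot reach the repeated-letter words that way. The append/prepend description of the $\mathrm{ad}^{R}$-iterates, together with the observation that the minimal letter $i_{0}$ occurs exactly once and must head any non-degenerate word, is what makes the single-survivor argument go through uniformly. The remaining points---that the system is drift-free, that $\pi$-relatedness legitimizes passing the projection through the brackets, and the grading by word length that reduces everything to a fixed degree---are routine.
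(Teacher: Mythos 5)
Your argument is correct, and it arrives at exactly the same family of brackets as the paper does, namely the left-nested commutators $[\cdots[[V_{i_0},V_{i_1}],V_{i_1}],\cdots,V_{i_k}]$ indexed by the non-degenerate words themselves, but it gets there by a genuinely different route. The paper never leaves $\mathbb{R}^{|\mathcal{I}_{d,n}|}$: it splits $V_i=\partial_{1;(i)}+P_i$, observes that $P_i$ does not involve $x^{1;(j)}$ for $j\geqslant i$ (because $(j,i)$ is then degenerate), and runs an induction showing directly that each such bracket equals $\partial_{m+1;(i,j_1,\dots,j_m)}$ plus a field with homogeneous linear coefficients vanishing at the origin. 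You instead lift to $T^{(n)}(\mathbb{R}^d)$, use $\pi$-relatedness and left-invariance of $S\mapsto S\otimes e_i$ to reduce H\"{o}rmander's condition to the purely algebraic surjectivity of the projection of the truncated free Lie algebra onto the non-degenerate coordinates, and settle that by the append/prepend count. The two proofs rest on the same combinatorial fact: a non-degenerate word must begin with its unique minimal letter, so prepending any later letter produces a degenerate word; this is precisely the paper's statement that $P_i$ is independent of $x^{1;(j)}$ for $j\geqslant i$, propagated through the induction. What your version buys is a clean separation of the (now routine) differential geometry from the word combinatorics, a visible explanation of why the left-nested rather than right-nested bracket is forced (the latter dies on repeated letters, as you note), and a statement that generalizes to any consistent collection for which the Lie-algebra projection is onto. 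What the paper's version buys is self-containment: it needs no facts about free Lie algebras or invariant vector fields. One point worth making explicit if you write this up: the intertwining identity $d\pi\circ\widehat{V}_i=V_i\circ\pi$ is exactly where the consistency $I\in\mathcal{I}_{d,n}\Rightarrow I'\in\mathcal{I}_{d,n}$ is used, since otherwise the projected coefficients would depend on discarded coordinates and the reduction to the free Lie algebra would fail.
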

\begin{proof}
First of all, the consistency of $\mathcal{I}_{d,n}$ implies that the
restriction of (\ref{eq: SDE in coordinates}) to $\mathcal{I}_{d,n}$
is itself a linear SDE of dimension $|\mathcal{I}_{d,n}|.$ It suffices
to verify Hörmander's condition at the origin. 

We use the notation $X^{k;I}$ for a component to keep track of the
length $k$ of the word $I\in\mathcal{I}_{d,n}.$ In geometric notation,
the generating vector fields of the SDE (\ref{eq: SDE in coordinates})
restricted to $\mathcal{I}_{d,n}$ are given by
\[
V_{i}=\sum_{I_{i}}x^{k-1;I_{i}'}\partial_{k;I_{i}},\ 1\leqslant i\leqslant d,
\]
where we set $x^{0;I_{i}'}=1.$ Here the sum is taken over all words
$I_{i}\in\mathcal{I}_{d,n}$ whose last letter is $i,$ and $I'_{i}$
is the word obtained by dropping the last letter from $I_{i}.$ 

We write 
\[
V_{i}=\partial_{1;(i)}+P_{i},
\]
where 
\[
P_{i}=\sum_{|I_{i}|\geqslant2}x^{k-1;I_{i}'}\partial_{k;I_{i}}
\]
is a vector field with homogeneous linear coefficients. For $1\leqslant i<j\leqslant d,$
we then have 
\begin{eqnarray*}
[V_{i},V_{j}] & = & [\partial_{1;(i)}+P_{i},\partial_{1;(j)}+P_{j}]\\
 & = & \partial_{1;(i)}P_{j}-\partial_{1;(j)}P_{i}+[P_{i},P_{j}]\\
 & = & \partial_{2;(i,j)}-\partial_{1;(j)}P_{i}+[P_{i},P_{j}],
\end{eqnarray*}
where the $\partial P$ denotes the vector field obtained by differentiating
the coefficients of $P.$ 

Now the key observation is that if $i\leqslant j,$ then $P_{i}$
does not depend on $x^{1;(j)}.$ Indeed, if this is not the case,
then $I_{i}=(j,i)$ has to be a word appearing in the summation, contradicting
the construction of $\mathcal{I}_{d,n}.$ Therefore, 
\[
\partial_{1;(j)}P_{i}=0
\]
and we have 
\[
[V_{i},V_{j}]=\partial_{2;(i,j)}+[P_{i},P_{j}].
\]
Note that $[P_{i},P_{j}]$ is a vector field with homogeneous linear
coefficients of the form $x^{k-1;I_{i}'}$ or $x^{l-1;I_{j}'}$ ($k,l\geqslant2$).

If $1\leqslant i<j_{1}\leqslant j_{2}\leqslant d$, then 
\begin{eqnarray*}
[[V_{i},V_{j_{1}}],V_{j_{2}}] & = & [\partial_{2;(i,j_{1})}+[P_{i},P_{j_{1}}],\partial_{1;(j_{2})}+P_{j_{2}}]\\
 & = & \partial_{3;(i,j_{1},j_{2})}-\partial_{1;j_{2}}[P_{i},P_{j_{1}}]+[[P_{i},P_{j_{1}}],P_{j_{2}}].
\end{eqnarray*}
Again we know that the second term on the right hand side vanishes
as $[P_{i},P_{j_{1}}]$ does not depend on $x^{1;(j_{2})}.$ 

By an induction argument, we obtain the fact that 
\[
[\cdots[[V_{i},V_{j_{1}}],V_{j_{2}}],\cdots,V_{j_{m}}]=\partial_{m+1;(i,j_{1},\cdots,j_{m})}+P_{i,j_{1},\cdots,j_{m}},
\]
for all $1\leqslant i<j_{1}\leqslant\cdots\leqslant j_{m}\leqslant d$
and $1\leqslant m\leqslant n-1,$ where 
\[
P_{i,j_{1},\cdots,j_{m}}=[\cdots[[P_{i},P_{j_{1}}],P_{j_{2}}],\cdots,P_{j_{m}}]
\]
is a vector field with homogeneous linear coefficients not depending
on any $x^{1;(j)}$ with $j\geqslant j_{m}.$ In particular,
\[
P_{i,j_{1},\cdots,j_{m}}(0)=0
\]
and we conclude that the linear span of $V_{i}$ and their Lie brackets
at the origin coincides with $\mathrm{Span}\{\partial_{k;I}:I\in\mathcal{I}_{d,n}\}$,
which is $\mathbb{R}^{|\mathcal{I}_{d,n}|}.$
\end{proof}

According to Lemma \ref{lem: hypoellipticity} and Hörmander's theorem
from the Malliavin calculus, we know that the law of $(w_{t_{0},t}^{I})_{I\in\mathcal{I}_{d,n}}$
has a smooth density with respect to the Lebesgue measure. A small
ball probability estimate follows immediately from this fact. To obtain
a corresponding capacity estimate, we need to the following lemma.
\begin{lem}
\label{lem: Sobolev estimate for higher degrees}For any $i\geqslant1$,
we have $w_{t_{0},t_{1}}^{i}\in\oplus_{j=0}^{i}\mathcal{H}_{j}$ and
\begin{equation}
\|w_{t_{0},t_{1}}^{i}\|{}_{r,q}\leqslant C_{r,q,i,d}|t_{1}-t_{0}|^{\frac{i}{2}},\label{eq: Sobolev estimate for higher degrees}
\end{equation}
where $C_{r,q,i,d}$ is a constant depending only on $r,q,i$ and
$d.$\end{lem}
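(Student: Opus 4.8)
The plan is to prove the bound $\|w_{t_0,t_1}^i\|_{r,q}\leqslant C_{r,q,i,d}|t_1-t_0|^{i/2}$ by reducing it to an $L^2$-estimate via hypercontractivity, exactly in the spirit of Lemma \ref{lem: Sobolev estimate}. First I would establish the chaos decomposition claim $w_{t_0,t_1}^i\in\oplus_{j=0}^i\mathcal{H}_j$. The natural route is the approximation argument already set up in the paper: the degree-$i$ iterated integral of the dyadic piecewise linear interpolation $w^{(m),i}_{t_0,t_1}$ is a polynomial functional of Brownian increments of total degree $i$, hence lies in $\oplus_{j=0}^i\mathcal{H}_j$. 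One then shows $w^{(m),i}_{t_0,t_1}\to w_{t_0,t_1}^i$ in $L^2$ as $m\to\infty$ (this is the higher-degree analogue of Lemma \ref{lem: L^2 estimates}, available from \cite{LQ02}); since $\oplus_{j=0}^i\mathcal{H}_j$ is a closed subspace of $L^2$, the limit $w_{t_0,t_1}^i$ stays in it.

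Once the chaos membership is in hand, the key mechanism is that on the fixed finite chaos $\oplus_{j=0}^i\mathcal{H}_j$ all the relevant norms are equivalent. Concretely, by the hypercontractivity estimate (\ref{eq: comparability of q and 2 norms}) the $L^q$-norm is controlled by the $L^2$-norm, and, more importantly, the full $(r,q)$-Sobolev norm of an element of a finite chaos is comparable to its $L^2$-norm, because the Malliavin derivatives $D^k$ act on $\mathcal{H}_j$ by degree-shifting operators that preserve the ``total degree $\leqslant i$'' bound (this is precisely the uniform control of Sobolev norms by $L^2$-norms for polynomials of fixed degree, cf. \cite{BGQ14}, Lemma 2.2, used already in the proof of Lemma \ref{lem: Sobolev estimate}). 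Thus I would write
\[
\|w_{t_0,t_1}^i\|_{r,q}\leqslant C_{r,q,i}\,\|w_{t_0,t_1}^i\|_{L^2},
\]
reducing everything to the $L^2$-norm.

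It then remains to bound $\|w_{t_0,t_1}^i\|_{L^2}$ by $C_{d,i}|t_1-t_0|^{i/2}$. This is a Brownian scaling statement: under the scaling $w_t\mapsto \lambda^{-1/2}w_{\lambda t}$ the degree-$i$ signature component scales like $\lambda^{i/2}$, so by self-similarity of Brownian motion the $L^2$-norm of $w_{t_0,t_1}^i$ is exactly $|t_1-t_0|^{i/2}$ times the $L^2$-norm of the corresponding component over a unit interval, the latter being a finite constant $C_{d,i}$ depending only on $i$ and $d$. Equivalently, one may obtain this from the uniform $L^2$-bound $\sup_m\|w^{(m),i}_{t_0,t_1}\|_{L^2}\leqslant C_{d,i}|t_1-t_0|^{i/2}$ (the $i=1,2$ cases being Lemma \ref{lem: L^2 estimates}) and passing to the limit. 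Combining the two displayed inequalities gives (\ref{eq: Sobolev estimate for higher degrees}).

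The main obstacle is the second step, the comparability of the Sobolev norm with the $L^2$-norm on a fixed finite chaos: the iterated integral $w_{t_0,t_1}^i$ is not literally a polynomial in finitely many variables but a limit of such, so one must either justify passing the finite-chaos Sobolev/$L^2$ comparison through the limit (using that the approximants converge in $\mathbb{D}_{r,q}$, which follows from the uniform polynomial Sobolev estimate combined with $L^2$-convergence, as in Lemma \ref{lem: Sobolev estimate}) or invoke directly the general principle that on $\oplus_{j=0}^i\mathcal{H}_j$ the norms $\|\cdot\|_{r,q}$ and $\|\cdot\|_{L^2}$ are equivalent. The $L^2$ scaling estimate and the chaos membership are comparatively routine given the machinery from \cite{LQ02} and \cite{BGQ14} that the paper has already imported.
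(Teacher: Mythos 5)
Your proposal is essentially correct, and its skeleton (approximate by the lifts of dyadic piecewise linear interpolations, note these are polynomials lying in $\oplus_{j=0}^{i}\mathcal{H}_{j}$, pass to the limit in $L^{2}$, then upgrade to the $(r,q)$-norm by the equivalence of Sobolev and $L^{2}$ norms on a fixed finite chaos via \cite{BGQ14}, Lemma 2.2) coincides with the paper's. The one genuinely different ingredient is how you obtain the $L^{2}$ bound $\|w_{t_{0},t_{1}}^{i}\|_{L^{2}}\leqslant C_{i,d}|t_{1}-t_{0}|^{i/2}$: you invoke Brownian scaling, which reduces the estimate to the finiteness of the single constant $\|w_{0,1}^{i}\|_{L^{2}}$, whereas the paper derives the uniform approximant bound $\sup_{m}\|\widetilde{w}_{t_{0},t_{1}}^{(m),i}\|_{L^{2}}\leqslant C_{i,d}|t_{1}-t_{0}|^{i/2}$ from the rough-path machinery of Theorem \ref{thm: technique of controlling higher degree signature components}, controlling the $i$-th level by the $\rho_{j}$ series built from levels $1$ and $2$ and then summing the dyadic $L^{2}$ estimates. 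The scaling route is cleaner for Brownian motion, but note two caveats: the finiteness of $\|w_{0,1}^{i}\|_{L^{2}}$ for $i\geqslant3$ is not free, since these components are defined only as pathwise limits via Lyons' extension, so one still needs either the control-function estimate or the uniform bound on approximants (your fallback, which is exactly the paper's main computation and which you assert rather than prove); and the scaling argument exploits exact self-similarity, so it would not survive the generalization to other Gaussian rough paths that the paper's covariance-based argument is designed to accommodate. Also, your direct claim of $L^{2}$ convergence of $\widetilde{w}_{t_{0},t_{1}}^{(m),i}$ is slightly glossed: the paper gets it from almost-sure convergence plus uniform $L^{q}$ bounds ($q>2$) via hypercontractivity, and some such uniform-integrability step is needed.
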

\begin{proof}
Let $\widetilde{\mathbf{w}}^{(m)}$ be the lifting of the $m$-th
dyadic piecewise linear interpolation of $w$ over $[0,1]$. Define
the control function $\omega_{m}(s,t)$ in the same way as in (\ref{eq: control function})
by replacing $\mathbf{w}$ by $\widetilde{\mathbf{w}}^{(m)}$. According
to Theorem \ref{thm: technique of controlling higher degree signature components},
we have 
\[
|\widetilde{w}_{s,t}^{(m),i}|\leqslant\frac{\omega_{m}(s,t)^{i/p}}{\beta(i/p)!},\ \forall i\geqslant1,
\]
and 
\[
\omega_{m}(s,t)\leqslant C\left(\rho_{1}(\widetilde{\mathbf{w}}^{(m)};s,t)+\rho_{2}(\widetilde{\mathbf{w}}^{(m)};s,t)\right),
\]
where $C$ is a constant depending only on $p$ and $\gamma.$ Therefore,
\[
\|w_{t_{0},t_{1}}^{(m),i}\|_{L^{2}}\leqslant C_{i}\left(\|\rho_{1}(\widetilde{\mathbf{w}}^{(m)};t_{0},t_{1})^{\frac{i}{p}}\|_{L^{2}}+\|\rho_{2}(\widetilde{\mathbf{w}}^{(m)};t_{0},t_{1})^{\frac{i}{p}}\|_{L^{2}}\right).
\]
It follows that 
\begin{eqnarray*}
\|\rho_{j}(\widetilde{\mathbf{w}}^{(m)};t_{0},t_{1})\|_{L^{\frac{2i}{p}}} & \leqslant & \sum_{l=1}^{\infty}l^{\gamma}\sum_{k=1}^{2^{l}}\||\widetilde{w}_{t_{l}^{k-1},t_{l}^{k}}^{(m),j}|^{\frac{p}{j}}\|_{L^{\frac{2i}{p}}}\\
 & \leqslant & C_{i}\sum_{l=1}^{\infty}l^{\gamma}\sum_{k=1}^{2^{l}}\|\widetilde{w}_{t_{l}^{k-1},t_{l}^{k}}^{(m),j}\|_{L^{2}}^{\frac{p}{j}}\\
 & \leqslant & C_{i,d}|t_{1}-t_{0}|^{\frac{p}{2}},
\end{eqnarray*}
where the last inequality follows from the fact that 
\[
\|\widetilde{w}_{s,t}^{(m),j}\|_{L^{2}}\leqslant C_{d}|t-s|^{\frac{j}{2}}
\]
for all $0\leqslant s\leqslant t\leqslant1$ and $m\geqslant1,$ even
in the case when $[s,t]$ is not a dyadic sub-interval of $[0,1]$.
This can be seen easily based on the computation in \cite{LQ02},
pp. 68--70. Therefore, we obtain that 
\begin{equation}
\|\widetilde{w}_{t_{0},t_{1}}^{(m),i}\|_{L^{2}}\leqslant C_{i,d}|t_{1}-t_{0}|^{\frac{i}{2}}.\label{eq: L^2 estimate for higher degrees}
\end{equation}

On the other hand, since $\widetilde{w}_{t_{0},t_{1}}^{(m),i}\in\oplus_{j=0}^{i}\mathcal{H}_{j},$
from (\ref{eq: L^2 estimate for higher degrees}) and (\ref{eq: comparability of q and 2 norms})
we know that the $L^{q}$-norm of $\widetilde{w}_{t_{0},t_{1}}^{(m),i}$
is uniformly bounded for any $q>2$. As $\widetilde{w}_{t_{0},t_{1}}^{(m),i}\rightarrow w_{t_{0},t_{1}}^{i}$
$\mathbb{P}$-almost-surely, it follows that the convergence holds
in $L^{2}$ as well. Therefore, $w_{t_{0},t_{1}}^{i}\in\oplus_{j=0}^{i}\mathcal{H}_{j}$
and it also satisfies (\ref{eq: L^2 estimate for higher degrees}).
Finally, (\ref{eq: Sobolev estimate for higher degrees}) follows
from \cite{BGQ14}, Lemma 2.2.
\end{proof}

Now we are able to establish the required small ball capacity estimate.
\begin{prop}
\label{prop: small ball capacity estimate}Given any $\tau>1,$ we
have the following estimate:

\[
\mathrm{Cap}_{r,q}\left(\left\Vert S_{n}(\mathbf{w})_{0,t_{1}}-S_{n}(\mathbf{w})_{0,t_{0}}\right\Vert \leqslant\eta\right)\leqslant\frac{C_{r,q,n,d,\tau}}{|t_{1}-t_{0}|^{\lambda_{n,d}/\tau q}}\eta^{\frac{|\mathcal{I}_{d,n}|}{\tau q}-r}
\]
for every $0<\eta<1$ and $n\in\mathbb{N},$ where $C_{r,q,n,d,\tau}$
is a constant depending only on $r,q,n,d$ and $\tau.$\end{prop}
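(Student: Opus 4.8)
The plan is to collapse the infinite-dimensional event onto a finite-dimensional small-ball event for the distinguished non-degenerate components, and then to convert a small-ball \emph{probability} estimate into a small-ball \emph{capacity} estimate by testing against a smooth bump composed with those components. Write $Y=(w_{0,t_{1}}^{I}-w_{0,t_{0}}^{I})_{I\in\mathcal{I}_{d,n}}\in\mathbb{R}^{|\mathcal{I}_{d,n}|}$, which is the increment over $[t_{0},t_{1}]$ of the diffusion $Z_{t}=(w_{0,t}^{I})_{I\in\mathcal{I}_{d,n}}$ governed by the restriction of (\ref{eq: SDE in coordinates}) to $\mathcal{I}_{d,n}$. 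Since the Hilbert--Schmidt norm of $S_{n}(\mathbf{w})_{0,t_{1}}-S_{n}(\mathbf{w})_{0,t_{0}}$ dominates the Euclidean norm of any subfamily of its tensor components, one has the inclusion $\{\|S_{n}(\mathbf{w})_{0,t_{1}}-S_{n}(\mathbf{w})_{0,t_{0}}\|\leqslant\eta\}\subseteq\{|Y|\leqslant\eta\}$, so by monotonicity of capacity it suffices to bound $\mathrm{Cap}_{r,q}(|Y|\leqslant\eta)$.

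The first task is to record the regularity and non-degeneracy of $Y$. By Lemma \ref{lem: Sobolev estimate for higher degrees} applied to each of the two signatures issued from the origin, together with the product rule in $\mathbb{D}_{\infty}$, every component of $Y$ lies in $\mathbb{D}_{\infty}$ with Sobolev norms bounded by a fixed positive power of $|t_{1}-t_{0}|$. For the non-degeneracy, the key observation is that for a Cameron--Martin direction $h$ supported on $[t_{0},t_{1}]$ one has $D_{h}Y=D_{h}Z_{t_{1}}$, because $Z_{t_{0}}=(w_{0,t_{0}}^{I})$ depends only on the path on $[0,t_{0}]$; hence the Malliavin covariance of $Y$ is bounded below, in the sense of symmetric matrices, by that of the flow $Z_{t_{1}}$ computed from the perturbations on $[t_{0},t_{1}]$ alone. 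Lemma \ref{lem: hypoellipticity} supplies H\"ormander's condition for this system, and the triangular (in word length) structure of the iterated brackets exhibited in its proof in fact holds at every point of $\mathbb{R}^{|\mathcal{I}_{d,n}|}$, not merely at the origin, so the covariance is almost-surely invertible. Consequently $Y$ admits a smooth density, and Brownian scaling through the dilation $\delta_{|t_{1}-t_{0}|^{1/2}}$ on the free nilpotent group turns the bounded unit-time density into the small-ball probability estimate $\mathbb{P}(|Y|\leqslant\eta)\leqslant C|t_{1}-t_{0}|^{-\lambda_{n,d}}\eta^{|\mathcal{I}_{d,n}|}$, with $\lambda_{n,d}=\frac{1}{2}\sum_{I\in\mathcal{I}_{d,n}}|I|$ the resulting homogeneity exponent.

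To pass from probability to capacity I would fix a smooth $\chi:\mathbb{R}^{|\mathcal{I}_{d,n}|}\to[0,1]$ with $\chi\equiv1$ on $\{|x|\leqslant2\}$ and $\mathrm{supp}\,\chi\subset\{|x|\leqslant3\}$, and set $F=\chi(Y/\eta)$. Since $Y\in\mathbb{D}_{\infty}$, automatically $F\in\mathbb{D}_{r,q}$ with $F\geqslant0$, and $F\geqslant1$ on the open neighbourhood $\{|Y|<2\eta\}$ of $\{|Y|\leqslant\eta\}$, so by the definition (\ref{eq: capacity}) it suffices to estimate $\|F\|_{r,q}$. By the chain rule, each $D^{j}F$ with $0\leqslant j\leqslant r$ is a sum of terms carrying a factor $\eta^{-j}$, a derivative of $\chi$ evaluated at $Y/\eta$ (hence supported on $\{|Y|\leqslant3\eta\}$), and a product of Malliavin derivatives of $Y$. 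Applying H\"older's inequality with exponent $\tau q$ on the cutoff factor and its conjugate on the remaining factors splits off $\mathbb{P}(|Y|\leqslant3\eta)^{1/(\tau q)}\leqslant C(|t_{1}-t_{0}|^{-\lambda_{n,d}}\eta^{|\mathcal{I}_{d,n}|})^{1/(\tau q)}$ from an $L^{\tau q/(\tau-1)}$-norm of the derivative factors that is finite and controlled, through the Sobolev bounds above, by a nonnegative power of $|t_{1}-t_{0}|$. Summing over $j\leqslant r$, the term $j=r$ dominates for $0<\eta<1$ and yields $\|F\|_{r,q}\leqslant C_{r,q,n,d,\tau}\,|t_{1}-t_{0}|^{-\lambda_{n,d}/(\tau q)}\,\eta^{|\mathcal{I}_{d,n}|/(\tau q)-r}$, which is the asserted bound.

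The step I expect to be genuinely delicate is the non-degeneracy with the \emph{correct} scaling in $t_{1}-t_{0}$ and uniformity in the random, unbounded starting configuration $Z_{t_{0}}=S_{n}(\mathbf{w})_{0,t_{0}}$. H\"ormander's condition in Lemma \ref{lem: hypoellipticity} is phrased at the origin, whereas the increment is taken from a general point; one must therefore verify that the bracket-generating property, and the resulting finiteness and scaling of the moments of the inverse Malliavin covariance, are insensitive to the starting point and transform correctly under $\delta_{|t_{1}-t_{0}|^{1/2}}$. This is precisely where the left-invariant and homogeneous structure of the signature, together with the finiteness of all moments of $Z_{t_{0}}$, have to be used with care; once these moment bounds are secured, the chain-rule and H\"older estimates of the previous paragraph are routine.
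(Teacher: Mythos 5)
Your overall architecture --- project onto the non-degenerate words $\mathcal{I}_{d,n}$, compose with a rescaled bump function, split off a small-ball probability by H\"older's inequality, and feed in the density bound coming from Lemma \ref{lem: hypoellipticity} and the quantitative H\"ormander theorem --- is the same as the paper's. The genuine gap lies in your choice of finite-dimensional functional. You work with $Y=(w_{0,t_{1}}^{I}-w_{0,t_{0}}^{I})_{I\in\mathcal{I}_{d,n}}$, the increment over $[t_{0},t_{1}]$ of the diffusion $Z_{t}=(w_{0,t}^{I})_{I}$, which is restarted at the random, unbounded point $Z_{t_{0}}$. To obtain a density (or small-ball) bound for $Y$ of the form $C|t_{1}-t_{0}|^{-\lambda_{n,d}}\eta^{|\mathcal{I}_{d,n}|}$ you must then control the inverse Malliavin covariance uniformly over the starting configuration and with the correct scaling; you correctly flag this as the delicate step but then assume it (``once these moment bounds are secured\dots''). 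An appeal to H\"ormander's condition holding at every point (which, granted, does follow from the triangular structure in the proof of Lemma \ref{lem: hypoellipticity}) does not by itself yield uniformity for a system with polynomially growing coefficients and unbounded random initial data, so as written the proof is incomplete at its central point. The paper sidesteps this entirely: using $S_{n}(\mathbf{w})_{0,t_{1}}-S_{n}(\mathbf{w})_{0,t_{0}}=S_{n}(\mathbf{w})_{0,t_{0}}\otimes(S_{n}(\mathbf{w})_{t_{0},t_{1}}-\mathbf{1})$ and $\|S_{n}(\mathbf{w})_{0,t_{0}}\|\geqslant1$, it reduces to the event $\{\|X_{t_{0},t_{1}}\|\leqslant\eta\}$ with $X_{t_{0},t}=(w_{t_{0},t}^{I})_{I\in\mathcal{I}_{d,n}}$ a diffusion started at the origin, which is exactly where Lemma \ref{lem: hypoellipticity} verifies H\"ormander's condition, and the quantitative density estimate applies verbatim. (Your route is repairable without abandoning $Y$: condition on the past up to $t_{0}$ and note that $\xi\mapsto S_{n}(\mathbf{w})_{0,t_{0}}\otimes\xi$ is, in these coordinates, a unipotent polynomial map of Jacobian one, so the conditional density of $Y$ has the same supremum as that of $X_{t_{0},t_{1}}$ --- but this is the paper's reduction in disguise.)

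A second, smaller defect: you invoke the bare definition (\ref{eq: capacity}) on the ``open neighbourhood'' $\{|Y|<2\eta\}$. That set is not open in $W$ for the uniform topology, because $Y$ is built from iterated integrals of degree at least two and is therefore not a continuous functional on $W$. You cannot bypass quasi-continuity: as in the paper, one must show that $F$ is $(r,q)$-quasi-continuous (being a $\mathbb{D}_{r,q}$- and quasi-sure limit of the continuous functionals built from piecewise linear interpolations) and then apply the capacity Chebyshev inequality (\ref{eq: Chebyshev inequality}). Your remaining steps --- the bound $\|F\|_{r,q'}\leqslant C\eta^{-r}$ via Lemma \ref{lem: Sobolev estimate for higher degrees} and the chain rule, and the H\"older split producing the factor $\mathbb{P}(\cdot)^{1/\tau q}$ --- coincide with the paper's.
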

\begin{proof}
Write $X_{t_{0},t}=(w_{t_{0},t}^{I})_{I\in\mathcal{I}_{d,n}}$ as a
diffusion in $\mathbb{R}^{|\mathcal{I}_{d,n}|}.$ By the multiplicative
structure of the signature path and the fact that $\|S_{n}(\mathbf{w})_{0,t_{0}}\|\geqslant1,$
we have 
\begin{eqnarray*}
\mathrm{Cap}_{r,q}\left(\|S_{n}(\mathbf{w})_{0,t_{1}}-S_{n}(\mathbf{w})_{0,t_{0}}\|\leqslant\eta\right) & \leqslant & \mathrm{Cap}_{r,q}\left(\|S_{n}(\mathbf{w})_{t_{0},t_{1}}-\mathbf{1}\|\leqslant\eta\right)\\
 & \leqslant & \mathrm{Cap}_{r,q}\left(\|X_{t_{0},t_{1}}\|\leqslant\eta\right).
\end{eqnarray*}

Now consider a function $f\in C^{\infty}(\mathbb{R}^{|\mathcal{I}_{d,n}|})$
such that 
\[
\begin{cases}
0\leqslant f\leqslant1,\\
f=1\ \mathrm{on}\ |x|\leqslant\eta\ \mathrm{and}\ f=0\ \mathrm{on}\ |x|\geqslant2\eta,\\
\left|\nabla^{k}f\right|\leqslant\frac{C_{r}}{\eta^{k}},\ \mathrm{for}\ k\leqslant r,
\end{cases}
\]
where $C_{r}$ is a constant depending only on $r.$ Let $F=f(X_{t_{0},t_{1}}).$
It follows that is smooth in the sense of Malliavin. By using Lemma
\ref{lem: Sobolev estimate for higher degrees} and the chain rule,
we obtain that 
\begin{equation}
\|F\|_{r,q'}\leqslant\frac{C_{r,q',n,d}}{\eta^{r}},\ \forall q'>1.\label{eq: Sobolev estimate for test function}
\end{equation}
Moreover, the same reason as in the proof of Proposition \ref{prop: maximal estimates}
shows that $F$ is $(r,q)$-quasi-continuous. Therefore, according
to the Chebyshev inequality (\ref{eq: Chebyshev inequality}), for
any $\tau>1,$ we have
\begin{eqnarray*}
\mathrm{Cap}_{r,q}(\|X_{t_{0},t_{1}}\|\leqslant\eta) & \leqslant & \mathrm{Cap}_{r,q}(F\geqslant1)\\
 & \leqslant & C_{r,q}\|F\|_{r,q}\\
 & \leqslant & C_{r,q}\sum_{i=0}^{r}\left(\mathbb{E}[\|D^{i}F\|^{q}\mathbf{1}_{\{|X_{t_{0},t_{1}}|\leqslant2\eta\}}]\right)^{\frac{1}{q}}\\
 & \leqslant & C_{r,q}\|F\|_{r,q_{1}}\mathbb{P}(\|X_{t_{0},t_{1}}\|\leqslant2\eta)^{\frac{1}{\tau q}}\\
 & \leqslant & \frac{C_{r,q,n,d,\tau}}{\eta^{r}}\mathbb{P}(\|X_{t_{0},t_{1}}\|\leqslant2\eta)^{\frac{1}{\tau q}},
\end{eqnarray*}
where $q_{1}=\tau q/(\tau-1).$

Finally, according to Lemma \ref{lem: hypoellipticity} and Hörmander's
theorem from the Malliavin calculus (here we use a quantitative version
in \cite{Shigekawa98}, Theorem 6.16), $X_{t_{0},t_{1}}$ has a smooth
density $p_{t_{0},t_{1}}(x)$ with respect to the Lebesgue measure
on $\mathbb{R}^{|\mathcal{I}_{d,n}|}$. In particular, $p_{t_{0},t_{1}}(x)$
satisfies the following estimate:
\[
\sup_{x\in\mathbb{R}^{|\mathcal{I}_{d,n}|}}p_{t_{0},t_{1}}(x)\leqslant\frac{C_{n,d}}{(t_{1}-t_{0})^{\lambda_{n,d}}}.
\]
Therefore, 
\begin{eqnarray*}
\mathrm{Cap}_{r,q}\left(\|X_{t_{0},t_{1}}\|\leqslant\eta\right) & \leqslant & \frac{C_{r,q,n,d,\tau}}{\eta^{r}}\left(\int_{\{x:|x|\leqslant2\eta\}}p_{t_{0},t_{1}}(x)dx\right)^{\frac{1}{\tau q}}\\
 & \leqslant & \frac{C_{r,q,n,d,\tau}}{|t_{1}-t_{0}|^{\lambda_{n,d}/\tau q}}\eta^{\frac{|\mathcal{I}_{d,n}|}{\tau q}-r}.
\end{eqnarray*}

\end{proof}

\subsection{Kakutani's Sub-division Argument}

Following the original sub-division argument of Kakutani \cite{Kakutani44},
we are now in a position to complete the proof of Theorem \ref{thm: quantitative non-self-intersection}. 

Here a remarkable fact that we are going to use is the sub-additivity
for the $q$-th power of the $(r,q)$-capacity (up to a constant depending
only on $r,q$) instead of the original sub-additivity. This sub-additivity
can be easily seen by using the renowned Meyer's inequalities and
the integral representation formula (see \cite{Shigekawa98}, Chapter 4) 
\[
(I-\mathcal{L})^{-\frac{r}{2}}=\frac{1}{\Gamma(r/2)}\int_{0}^{\infty}e^{-t}t^{\frac{r}{2}-1}T_{t}dt,
\]
where $\mathcal{L}$ is the generator of the Ornstein-Uhlenbeck semigroup
$T_{t}.$ If we use the sub-additivity for the capacity itself, we
will end up with the quantitative constraint $|\mathcal{I}_{d,n}|>rq+4q$
for the non-self-intersection property, which is not as sharp
as the version we are going to obtain.

\begin{proof}[Proof of Theorem \ref{thm: quantitative non-self-intersection}]

Fix two dyadic sub-intervals $[s_{0},s_{1}]$, $[t_{0},t_{1}]$ with
equal length $\Delta$ and $s_{1}<t_{0}.$ Let $\mathcal{A}_{n}$
be the event that $S_{n}(\mathbf{w})_{0,s}=S_{n}(\mathbf{w})_{0,t}$
for some $s\in[s_{0},s_{1}]$ and $t\in[t_{0},t_{1}].$ It follows
that 
\begin{eqnarray*}
\mathcal{A}_{n} & \subseteq & \left\{ w:\|S_{n}(\mathbf{w})_{0,s_{0}}-S_{n}(\mathbf{w})_{0,t_{0}}\|\leqslant2\eta\right\} \\
 &  & \bigcup\{w:\max_{s_{0}\leqslant s\leqslant s_{1}}\|S_{n}(\mathbf{w})_{0,s}-S_{n}(\mathbf{w})_{0,s_{0}}\|>\eta\}\\
 &  & \bigcup\{w:\max_{t_{0}\leqslant t\leqslant t_{1}}\|S_{n}(\mathbf{w})_{0,t}-S_{n}(\mathbf{w})_{0,t_{0}}\|>\eta\}
\end{eqnarray*}
for every $\eta>0.$ Combining with Proposition \ref{prop: maximal estimates}
and Proposition \ref{prop: small ball capacity estimate}, we have
\begin{eqnarray}
\mathrm{Cap}_{r,q}(\mathcal{A}_{n})^{q} & \leqslant & \frac{C_{r,q,n,d,\tau}}{|t_{0}-s_{0}|^{\lambda_{n,d}/\tau}}\eta^{\frac{|\mathcal{I}_{d,n}|}{\tau}-rq}\nonumber \\
 &  & +C_{N,q,d}\left(\frac{\Delta^{qN}}{\eta^{2Nq(1+\delta)}}\left(1+\frac{\Delta^{qN}}{\eta^{2Nq(1+\delta)}}\right)\right.\nonumber \\
 &  & \left.+\eta^{\frac{2Nq\delta}{n}}\right),\label{eq: before sub-division}
\end{eqnarray}
for any $\tau>1,N>r,\delta>0$ and small $\eta.$

Now we divide the intervals $[s_{0},s_{1}]$ and $[t_{0},t_{1}]$
into dyadic sub-intervals with length $\Delta/2^{l}.$ Note that any
$I\subseteq[s_{0},s_{1}]$ and $J\subseteq[t_{0},t_{1}]$ are separated
from each other by distance at least $t_{0}-s_{1}.$ Therefore, by
applying (\ref{eq: before sub-division}) to the dyadic sub-intervals,
we obtain that 
\begin{eqnarray*}
\mathrm{Cap}_{r,q}(\mathcal{A}_{n})^{q} & \leqslant & \frac{C_{r,q,n,d,\tau}}{|t_{0}-s_{1}|^{\lambda_{n,d}/\tau q}}2^{2l}\eta^{\frac{|\mathcal{I}_{d,n}|}{\tau}-rq}\\
 &  & +C_{N,q,d}2^{2l}\left(\frac{2^{-Nql}\Delta^{Nq}}{\eta^{2Nq(1+\delta)}}\left(1+\frac{2^{-Nql}\Delta^{Nq}}{\eta^{2Nq(1+\delta)}}\right)\right.\\
 &  & \left.+\eta^{\frac{2Nq\delta}{n}}\right).
\end{eqnarray*}
Setting $\eta=2^{-\sigma l}$ with $\sigma>0,$ we arrive at 
\begin{eqnarray}
\mathrm{Cap}_{r,q}(\mathcal{A}_{n})^{q} & \leqslant & \frac{C_{r,q,n,d,\tau}}{|t_{0}-s_{1}|^{\lambda_{n,d}/\tau}}2^{-l\left(\sigma\left(\frac{|\mathcal{I}_{d,n}|}{\tau}-rq\right)-2\right)}\nonumber \\
 &  & +C_{N,q,d}2^{-l\left(Nq(1-2\sigma(1+\delta))-2\right)}\nonumber \\
 &  & \cdot\left(1+2^{-Nql(1-2\sigma(1+\delta))}\right).\label{eq: before taking limit}
\end{eqnarray}

To expect that the right hand side goes to zero as $l\rightarrow\infty,$
we choose the parameters $\tau,\delta$ and $\sigma$ such that 
\begin{eqnarray*}
\sigma\left(\frac{|\mathcal{I}_{d,n}|}{\tau}-rq\right)>2
\end{eqnarray*}
and
\[
1-2\sigma(1+\delta)>0.
\]
This is equivalent to 
\begin{equation}
\frac{2}{|\mathcal{I}_{d,n}|/\tau-rq}<\sigma<\frac{1}{2(1+\delta)},\label{eq: constraint}
\end{equation}
provided that the left hand side is positive. As $\tau>1$ and $\delta>0$
is arbitrary, when 
\[
|\mathcal{I}_{d,n}|>rq+4,
\]
a choice of parameters satisfying (\ref{eq: constraint}) is certainly
possible. Therefore, after choosing $\tau,\delta$ and $\sigma,$
when $N$ is large we conclude that the right hand side of (\ref{eq: before taking limit})
converges to zero as $l\rightarrow\infty.$ In other words, we have
\[
\mathrm{Cap}_{r,q}(\mathcal{A}_{n})=0.
\]

On the other hand, if $S_{n}(\mathbf{w})_{0,s}=S_{n}(\mathbf{w})_{0,t}$
for some $s<t,$ apparently there exist two disjoint dyadic sub-intervals
$[s_{0},s_{1}]$ and $[t_{0},t_{1}]$ containing $s$ and $t$ respectively.
Therefore, 
\[
\mathrm{Cap}_{r,q}(\mathcal{O}_{n})=0,
\]
which gives the first assertion of Theorem \ref{thm: quantitative non-self-intersection}.

To prove the second assertion, let $\mathcal{O}$ be the event that the Brownian signature path has self-intersection at some $0\leqslant s<t\leqslant1$. Then we have $$\mathcal{O}\subseteq\bigcap_{n\geqslant1}\mathcal{O}_n.$$ Therefore, $\mathcal{O}$ has zero $(r,q)$-capacity for every $r$ and $q$. In other words, $\mathcal{O}$ is a slim set. 
\end{proof}

Based on the deterministic uniqueness result for signature in \cite{BGLY15}, it is not hard to see that the quasi-sure uniqueness for the signature of Brownian motion (Theorem \ref{thm: quasi-sure uniqueness of signature}) is a direct consequence of Theorem \ref{thm: quantitative non-self-intersection}.

\begin{proof}[Proof of Theorem \ref{thm: quasi-sure uniqueness of signature}]

From \cite{BGLY15}, Theorem 4.1, we know that the space of
signatures for weakly geometric $p$-rough paths has a canonical real
tree structure. In particular, if $g$ is the signature of some weakly
geometric $p$-rough path $\mathbf{X},$ then there exists a unique
weakly geometric $p$-rough path $\widetilde{\mathbf{X}}$ (up to
reparametrization) such that its signature is $g$ and its signature
path is non-self-intersecting.

In our case, let $\mathcal{N}$ be the slim set outside which every
Brownian signature path is non-self-intersecting. Suppose $w,w'\in\mathcal{N}^{c}$
are two sample paths of Brownian motion with the same signature. It follows that the
corresponding rough paths $\mathbf{w}$ and $\mathbf{w}'$
differ by a reparametrization, and hence $w$ are $w'$ differ by
a reparametrization. Therefore, quasi-surely every sample path of Brownian motion is
uniquely determined by its signature up to reparametrization. 

\end{proof}

\section{Final Remarks}

We give a few remarks to conclude our paper.

First of all, from the details of the proof, it is not hard to see
that our technique is robust as it only involves the Gaussian nature
of Brownian motion and the structure of its covariance function. In
particular, its explicit distribution, martingale property and Markov
property are not used at all. Therefore, 
our work extends to any Gaussian rough path under the intrinsic capacities induced by the underlying Gaussian measure
over the associated abstract Wiener space, for the cases where the Gaussian rough path $\mathbf{X}$ is well defined quasi-surely
and Hörmander's theorem for
rough differential equations driven by $\mathbf{X}$ is applicable. A fundamental example where everything works is the fractional Brownian
motion $B^{H}$ with Hurst parameter $H>1/4$. 

On the other hand, one might ask if we could strengthen Theorem \ref{thm: quantitative non-self-intersection}
to the intrinsic dimension of the truncated signature path $S_{n}(\mathbf{w})_{0,t}$
instead of restricting it to the collection $\mathcal{I}_{d,n}$ of components.
Indeed, it is known that (see \cite{FV10}) $S_{n}(\mathbf{w})_{0,t}$
satisfies an intrinsic hypoelliptic differential equation 
\[
dS_{n}(\mathbf{w})_{0,t}=\sum_{i=1}^d U_{i}(S_{n}(\mathbf{w})_{0,t})\circ dw_{t}^{i}
\]
on the free nilpotent Lie group $G^{n}(\mathbb{R}^{d})$ of order
$n$ over $\mathbb{R}^{d}$. Therefore, it is reasonable to expect
that $S_{n}(\mathbf{w})_{0,t}$ is non-self-intersecting outside a
set of zero $(r,q)$-capacity provided 
\[
\dim G^{n}(\mathbb{R}^{d})>rq+4.
\]
This will be sharper as $|\mathcal{I}_{d,n}|$ grows with rate $n^{d}$
while $\dim G^{n}(\mathbb{R}^{d})$ grows with rate $d^{n}/n$ as
$n\rightarrow\infty.$ However, what is missing is the analysis on
the vector fields $U_{i}$ in order to guarantee a priori estimates
on the density which is needed in our proof. This is non-trivial as
the vector fields are in fact polynomial of degree $n$ when pulled
back to the free nilpotent Lie algebra. It is not clear how to develop
a localization method which is consistent with our argument. We do
not pursue this direction because unlike the full signature
path, the truncated signature path up to a given degree does not have a
natural interpretation on the geometric behavior of the Brownian rough
path.

However, the case when $n=2$ is particularly interesting because
it is just the Brownian rough path. In this case, 
\[
\dim G^{2}(\mathbb{R}^{d})=|\mathcal{I}_{d,2}|=\frac{d^{2}+d}{2}.
\]
Based on the works of Dvoretzky, Erd\H{o}s and Kakutani \cite{DEK50} and Lyons \cite{Lyons86}
as we mentioned in the introduction, it is natural to expect that
the Brownian rough path has self-intersection with positive probability
when $d=2$ while it is non-self-intersecting outside a set of zero
$(1,2)$-capacity when $d=3.$ Moreover, it is even not unreasonable to 
expect that the Brownian rough path is non-self-intersecting outside a set of zero
$(r,2)$-capacity if and only if
$d^2+d\geqslant4r+8$.

\end{document}